\documentclass[a4paper,11pt]{amsart}          
\usepackage{amsfonts,amsmath,latexsym,amssymb} % these packages are required
\usepackage{amsthm}      
\usepackage[dvipsnames]{xcolor}

\usepackage{float}
\usepackage{tikz}
\usepackage{circuitikz}
\usepackage{hyperref}
\usepackage{mathtools}
\usepackage{enumitem}
\allowdisplaybreaks

\newtheorem{theorem}{Theorem}[section]      
\newtheorem{lemma}[theorem]{Lemma}               
\newtheorem{corollary}[theorem]{Corollary}

\theoremstyle{definition}

\newtheorem{definition}[theorem]{Definition}
\newtheorem{assumption}[theorem]{Assumption}

\newtheorem{remark}[theorem]{Remark}

\DeclareMathOperator\supp{supp}
\DeclareMathOperator\dist{dist}
\DeclareMathOperator{\sgn}{sgn}
\renewcommand{\a}{a}%admittance
\renewcommand{\Re}{\operatorname{Re}}%real part
\renewcommand{\Im}{\operatorname{Im}}%imaginary part

\newcommand{\cp}{\operatorname{cap}}
\newcommand{\abs}[1]{\left|#1\right|}%imaginary part

\renewcommand{\L}{\mathcal{L}}
\newcommand{\D}{\mathcal{D}}
\newcommand{\Q}{\mathcal{Q}}
\newcommand{\cfreq}{s}

\author[A. Muranova]{Anna Muranova}
\address{Anna Muranova: University of Warmia and Mazury in Olsztyn,
Faculty of Mathematics and Computer Science,
ul. Słoneczna 54, 
10-710 Olsztyn, Poland} 

 \email{anna.muranova@uwm.edu.pl}

\title[$m$-sectorial Laplacians and recurrence]{$m$-sectorial discrete Laplacians and recurrence of complex-weighted graphs}

\begin{document}

\maketitle

\begin{abstract}
We consider complex-weighted graphs, whose edge weights belong to a sector in the complex plane. We show that the corresponding Dirichlet Laplacian is $m$-sectorial, and, hence, generates a contractive holomorphic $C_0$-semigroup. Further, it is shown that every sectorial complex-weighted graph can be extended to an electrical network, where by electrical networks we mean graphs, whose edge weights are holomorphic functions, arising from physical admittances. This result allows us to    establish  convergence results for the infinite complex-weighted graphs, e.g. convergence of solutions of Dirichlet problems  and convergence of complex-valued capacities on a finite exhaustion. Finally, we define a recurrence for complex-weighted graphs, and, using the convergence results,  give its characterizations in terms of functional spaces, capacity, Green's function, resolvents of the Dirichlet Laplacian and properties of the Neumann Laplacian.
\end{abstract}

{\footnotesize
{\bf Keywords:} {weighted graph, sectorial form, $m$-sectorial operator, discrete Laplacian, Dirichlet Laplacian, recurrence, $C_0$-semigroup, holomorphic semigroup, resolvent, Green's function,  admittance,  electrical network.} 
\smallskip

{\bf Mathematics Subject Classification 2020:}{ 05C22, 05C50, 47B12, 47A07, 47A08, 47A10, 47D03 , 47D06,  47A60,  94C15.} 
}

%\tableofcontents

\section{Introduction}
The discrete Laplacian on graphs is a classical topic. Firstly it was considered on combinatorial graphs, and than extended to weighted graphs with positive weights. For further details we refer reader to monographs \cite{Chung, Grigoryan, LPW, KellerLenzWojcBook}. The Laplacian is known to be closely related to random walks, Markov chains, flows and electrical networks, see \cite{DoyleSnell, LPW, Woess09, Woess00}. 

It is known in physics, that an electrical network, connected to an AC power source, can be modeled using a graph, whose edge weights are rational functions of special structure. More precisely,
the electrical network, can be represented as a graph, whose each edge $\{x,y\}$  is endowed with a complex-valued rational function
\begin{equation*}
\a^{(\cfreq)}(x,y)=\dfrac{\cfreq}{L_{xy}\cfreq^2 +R_{xy}\cfreq+D_{xy} },
\end{equation*}
 where $R_{xy}\ge 0$ is the
resistance of this edge, $L_{xy}\ge 0$ is the inductance, $D_{xy}\ge 0$ is the inverse
capacitance and at least one of $R_{xy},L _{xy}, D_{xy}$ is not equal $0$. Usually, one take the domain of this function to be the complex right half-plane $\cfreq\in \Bbb C$ with $\Re \cfreq>0$,
and call the function {\em admittance} (see \cite{Brune, ChenTeplyaev, Feynman, Muranova3, Muranova1, MuranovaWoess}). This model gives rise to a family of graphs and, consequently, Laplace operators  $\L^{(\cfreq)}$ on the set of functions $f: X\to \Bbb C$, where $X$ is a finite or countable set of vertices  of a graph, see \cite{Muranova3, Muranova1}. Moreover, in \cite{MuranovaWoess} it is proven, that recurrence and transience is well-defined for each family, i.e. it does not depend on $\cfreq$.

In this paper we consider complex-weighted graphs, whose edge weight $b: X\times X\to \Bbb C$ maps to a sector, i.e.
$$
|\Im b(x,y)|\le c\cdot \Re b(x,y),
$$ 
for some $c\ge 0$ and any edge $\{x,y\}$. 
We prove that the corresponding Dirichlet Laplacian is an $m$-sectorial operator, and, hence, by Lumer-Phillips theorem, generates  contractive holomorphic $C_0$-semigroup. 

The main result of this paper is Theorem \ref{thm::main} which states, that for each  complex-weighted graph, whose edge weights belong to a sector, there exist an electrical network and $\cfreq_0\in \Bbb C$ with $\Re \cfreq_0>0$ such that  $b(x,y)=\a^{(\cfreq_0)}(x,y)$ for all edges $\{x,y\}$, i.e. any complex-weighted graph can be ``extended''~to an electrical network. This result can be considered as a generalization of the classical correspondence between real-weighted graphs and electrical networks with resistors, see, e.g. monograph \cite{DoyleSnell} for more details. 

Further, we introduce a recurrence for the complex-weighted graphs. Theorem \ref{thm::main} allows us to introduce a capacity of infinite complex-weighted graph,  and characterize the recurrence of complex-weighted graphs in terms  the capacity, Theorem \ref{thm::reccap}. Then we introduce a Green's function on the complex-weighted graphs and, using holomorphicity of capacity and Theorem \ref{thm::main}, we show, that the Green's function is well-defined and also characterizes recurrence. Finally, we show that the Neumann Laplacian on the complex-weighted graphs is also $m$-sectorial and relate its properties to recurrence. In fact, Theorem \ref{thm::main}  allows us to use uniform convergence of holomorphic functions for complex-weighted graphs instead of monotone convergence, used for real-weighted graphs and not applicable for complex values. 

The paper is organized as follows. Section \ref{sect::preliminaries} contains preliminaries and basic results on sectorial complex-weighted graphs. Section \ref{sect::finiteapprox} elaborates on finite approximations of the infinite complex-weighted graphs. Section \ref{sect::elnetwork} explains a relation between the complex-weighted graphs and the electrical networks. Section \ref{section:recurrence} is dedicated to the definition of recurrence for complex-weighted graphs and its characterizations in terms of capacity, Green's function, resolvents of the Dirichlet Laplacian and properties of the Neumann Laplacian.

\section{Preliminaries}
\label{sect::preliminaries}
\subsection{Complex-weighted graphs and formal Laplacian}

\begin{definition}\label{def::graph}
A {\em graph (with a measure)} is a triple $(X,E,m)$, where $X$ is an at most countable set, 
$$
E
 \subset \{\{x,y\}\in X\times X\;\mid\;x\ne y\}
 $$ 
 is a given subset of unordered pairs of elements of $X$, and $m:X\to\Bbb R^+$. The elements of the set $X$ are called {\em vertices}, the elements of $E$ are called {\em edges}, and $m$ is the {\em measure}.
\end{definition}

For any $x,y\in X$ we write $x\sim y$ if $\{x,y\}\in E$. A graph is called {\em locally finite} if the set
$
\{y\;\mid\;y\sim x\}
$ is finite for all $x\in X$.  A {\em path} between  vertices $x,y\in X$ is a finite sequence $(x_0,\ldots,x_{n}), n\in \mathbb N\cup\{0\}$,  of vertices such that
$$
x=x_0\sim x_1\sim x_2\sim \dots \sim x_n=y.
$$

A set $ K\subseteq X $ is called {\em connected}, if there exists a path between any two vertices of it, and so we call  the graph \emph{connected} if $ X $ is connected.

\begin{assumption}\label{a:graph}  In this paper we assume that all graphs $ (X,E,m) $ are locally finite and connected.
\end{assumption}

Let us denote
$$
\Bbb H_r:=\{\cfreq \in \Bbb C\;\mid\;\Re \cfreq>0\}.
$$

\begin{definition}\label{def::complexweightedgraph}
A {\em complex-weighted graph} is a graph $(X,E,m)$, whose each edge is equipped with the weight $b:E\to \Bbb H_r$, satisfying the following property: there exists $c\ge 0$ such that
\begin{equation}\label{bsect}
\left|\Im b(x,y)\right|\le c \cdot \Re b(x,y).
\end{equation}
We call this property {\em sectoriality} and $c$ is a {\em sectoriality constant}.
\end{definition}
We assume $b(x,y)\equiv 0$ if there is no edge between $x$ and $y$. Hence, any complex-weighted graph is uniquely determined by the triple $(X,b,m)$, and we will refer to this triple as the complex-weighted graph.

\begin{assumption}
In this paper the complex weightsof a graph always satisfy a sectoriality \eqref{bsect}, i.e. all the weights of the graph belong to a sector of the complex plane.
\end{assumption}

The sectoriality  leads to many useful features of the graph and its energy form, the most basic of which are discussed in this section below. Moreover, the corresponding Dirichlet Laplacian is $m$-sectorial and generates a contractive holomorphic $C_0$-semigroup, see Section \ref{section::DirichletLaplacian}.

Let $X$ be an at most countable set and $(X,b,m)$ be a complex-weighted graph. We denote by $C(X)$ the set of all complex-valued functions on $X$.

The {\em formal Laplacian} is acting on  any function $f\in C(X)$ via
$$
\L_{b,m} f(x):=\dfrac{1}{m(x)}\sum_{y\in X} (f(x)-f(y)) b(x,y)
$$
We will omit subscripts $b$ and(or) $m$ when they are  clear from the context.

\subsection{Energy form and formal sesquilinear form }

 Let $\ell^2(X,m)$ be the set of functions
$$\{f\in C(X)\;\mid\;\sum_{x\in X}|f(x)|^2m(x)<\infty\big\},
$$
with a scalar product
$$
(f\mid g)=\sum_{x\in X} f(x)\overline{g(x)}m(x).
$$
Obviously, $\ell^2(X,m)$  is a Hilbert space.

Further, let us define a subspace $\mathcal D(X)\subset C(X)$ by

\begin{align*}
\mathcal D(X)&=\Big\{f\in C(X)\;\mid\;\sum_{x,y\in X}\abs{f(x)-f(y)}^2 |b(x,y)|<\infty\Big\}\\
&=\Big\{f\in C(X)\;\mid\; \sum_{x,y\in X}\abs{f(x)-f(y)}^2 b(x,y)\substack{ \mbox{ converges for some (all) }\\\mbox{ order of summations}}\Big\}\\
&=\Big\{f\in C(X)\;\mid\;\sum_{x,y \in X}\abs{f(x)-f(y)}^2 \Re b(x,y) <\infty\Big\},
\end{align*}
where the equalities follow from the sectoriality \eqref{bsect}.

The {\em (formal) energy} of a function is defined for any $f\in \mathcal D(X)$ as 
\begin{align*}
\mathcal Q_b(f)&=\dfrac12 \sum_{x,y\in X}\abs{f(x)-f(y)}^2b(x,y).
\end{align*}
We will omit the subscript $b$, if it is clear from the context.

%Note that $f\in\D(X)$ if and only if $\overline f\in \D(X)$.

Note that 
\begin{equation*}%\label{eq::Reqf}
\Re \mathcal Q(f)=\dfrac12 \sum_{x,y\in X}\abs{f(x)-f(y)}^2\Re b(x,y)\ge 0,
\end{equation*}
and
$$
\Im \mathcal Q(f)=\dfrac12 \sum_{x,y\in X}\abs{f(x)-f(y)}^2\Im b(x,y).
$$

Further,  the following lemma follows immediately from the sectoriality~\eqref{bsect}.
\begin{lemma}\label{lem::formalQmapstosector}
Let $(X,b,m)$ be a complex-weighted graph. The  form $\mathcal Q=\mathcal Q_b$ with the domain $\D(X)$ is sectorial, i.e. it satisfies
$$
|\Im \mathcal Q(f)|\le  c \cdot\Re \mathcal Q(f),
$$
for some $c\ge0 $ for any $f\in \D(X)$.
\end{lemma}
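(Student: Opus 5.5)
The plan is to take the sectoriality constant $c$ of the form to be the sectoriality constant $c$ of the weights from \eqref{bsect}. The argument reduces to the triangle inequality, applied term by term to the series defining $\Im\mathcal Q(f)$.

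Fix $f\in\D(X)$. By the third description of $\D(X)$, the series $\sum_{x,y}|f(x)-f(y)|^2\Re b(x,y)$ converges. Since $b$ takes values in the sector, $|b(x,y)|\le \sqrt{1+c^2}\,\Re b(x,y)$ for every pair, so the series defining $\mathcal Q(f)$ converges absolutely. Hence both series
$$\Re\mathcal Q(f)=\dfrac12\sum_{x,y\in X}|f(x)-f(y)|^2\Re b(x,y),\qquad \Im\mathcal Q(f)=\dfrac12\sum_{x,y\in X}|f(x)-f(y)|^2\Im b(x,y)$$
converge absolutely, independently of the order of summation, as already noted in the text. For pairs $x,y$ that are not joined by an edge we have $b(x,y)=0$, so these terms contribute nothing and \eqref{bsect} holds for them trivially.

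The estimate itself is then
$$|\Im\mathcal Q(f)|\le \dfrac12\sum_{x,y\in X}|f(x)-f(y)|^2\,|\Im b(x,y)|\le \dfrac c2\sum_{x,y\in X}|f(x)-f(y)|^2\,\Re b(x,y)=c\,\Re\mathcal Q(f).$$
The first inequality is the triangle inequality for the absolutely convergent series. The second applies \eqref{bsect} termwise, using $|f(x)-f(y)|^2\ge0$.

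No step is a genuine obstacle. The only point needing care is that the triangle inequality is applied to a convergent double series, which is legitimate precisely because of the absolute convergence established in the second paragraph.
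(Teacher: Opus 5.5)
Your proof is correct and is the same argument the paper intends. The paper gives no proof beyond saying the lemma ``follows immediately from the sectoriality \eqref{bsect}'', that is, by applying $|\Im b(x,y)|\le c\,\Re b(x,y)$ termwise with the same constant $c$; your absolute-convergence check just makes explicit what the paper leaves implicit.
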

The next several lemmas are almost immediate corollaries of the definition of the form $\Q$ and are presented here for further references.

\begin{lemma}\label{lem::complexf}
Let $(X,b,m)$ be a complex-weighted graph. For any $f\in C(X)$ the following are equivalent:
\begin{itemize}
\item
$f\in \D(X)$,
 \item
 the both  functions $\Im f, \Re f\in \D(X)$.

 \end{itemize} 
 Moreover, if  any of the equivalent conditions is satisfied, the following holds:
\begin{equation}\label{eq::QfQRefQImf}
\mathcal Q(f)=\mathcal Q(\Re f)+\mathcal Q(\Im f).
\end{equation}
\end{lemma}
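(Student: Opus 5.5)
The whole argument rests on one pointwise identity. For $f\in C(X)$ write $u=\Re f$ and $v=\Im f$, which are real-valued. For any $x,y\in X$ the numbers $u(x)-u(y)$ and $v(x)-v(y)$ are real, and they are the real and imaginary parts of $f(x)-f(y)$. Hence
\begin{equation*}
\abs{f(x)-f(y)}^2=\abs{u(x)-u(y)}^2+\abs{v(x)-v(y)}^2 .
\end{equation*}
I would use the third description of $\D(X)$, namely $\sum_{x,y}\abs{g(x)-g(y)}^2\Re b(x,y)<\infty$, because in that form every term is nonnegative.

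For the equivalence, multiply the identity by $\Re b(x,y)\ge 0$ and sum over $x,y\in X$. Since all terms are nonnegative, we may split the sum:
\begin{equation*}
\sum_{x,y}\abs{f(x)-f(y)}^2\Re b(x,y)=\sum_{x,y}\abs{u(x)-u(y)}^2\Re b(x,y)+\sum_{x,y}\abs{v(x)-v(y)}^2\Re b(x,y),
\end{equation*}
with values in $[0,\infty]$. The left side is finite if and only if both sums on the right are finite. That is, $f\in\D(X)$ if and only if $\Re f,\Im f\in\D(X)$.

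For \eqref{eq::QfQRefQImf}, assume these conditions hold. Multiply the pointwise identity by $b(x,y)$ and sum. Each of the three series converges absolutely, because $\abs{b(x,y)}\le \sqrt{1+c^2}\,\Re b(x,y)$ by \eqref{bsect} and the corresponding sums with $\Re b$ are finite. Absolutely convergent series can be added termwise in any order, so multiplying by $\tfrac12$ gives $\Q(f)=\Q(\Re f)+\Q(\Im f)$.

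The only point needing care is justifying the rearrangement and splitting of double sums over a countable set. Nonnegativity handles this in the first step, and absolute convergence, supplied by sectoriality, handles it in the second. I do not expect any real obstacle.
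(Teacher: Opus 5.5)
Your proof is correct and takes the same route as the paper: you combine the pointwise identity $|f(x)-f(y)|^2=|\Re f(x)-\Re f(y)|^2+|\Im f(x)-\Im f(y)|^2$ with sectoriality \eqref{bsect}, which through $|b(x,y)|\le\sqrt{1+c^2}\,\Re b(x,y)$ justifies splitting the series. The only difference is that you write out the nonnegativity and absolute-convergence details the paper leaves implicit.
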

\begin{proof}
The statement follows from the sectoriality \eqref{bsect}, the fact that
$$
|f(x)-f(y)|^2=|\Re f(x)-\Re f(y)|^2+|\Im f(x)-\Im f(y)|^2.
$$
and the definition of $\mathcal Q$.
\end{proof}

The corresponding (formal) sesquilinear form is defined for any $f,g\in \D(X)$ by
\begin{align*}
\mathcal Q(f,g)&=\dfrac12 \sum_{x,y\in X}{(f(x)-f(y))}\overline{(g(x)-g(y))}b(x,y)
\end{align*}
The following inequality, which is a consequence of the to Cauchy-Schwarz inequality, is known for sectorial forms:

\begin{equation}\label{eq::CSforsect}
|\mathcal Q(f,g)|\le (1+c) \left(\Re \mathcal Q(f)\right)^\frac{1}{2} \left(\Re \mathcal Q(g)\right)^\frac{1}{2}.
\end{equation}

\bigskip
 Let us denote by $C_c(X)$ the subset of $C(X)$ of all complex-valued functions on $X$ with finite support.

\begin{lemma}
Let $(X,b,m)$ be a complex-weighted graph. Then $C_c(X)$ is dense in $\ell^2(X,m)$.
\end{lemma}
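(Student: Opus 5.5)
The plan is a truncation argument; the weights $b$ and the sectoriality play no role, since the statement concerns only the Hilbert space $\ell^2(X,m)$. First I observe that $C_c(X)\subset \ell^2(X,m)$, because a finitely supported function gives a finite sum $\sum_x |f(x)|^2 m(x)$. Next, $X$ is at most countable by Definition \ref{def::graph}. If $X$ is finite there is nothing to prove, since then $C_c(X)=C(X)=\ell^2(X,m)$. Otherwise I enumerate $X=\{x_1,x_2,\dots\}$ and set $K_n=\{x_1,\dots,x_n\}$.

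Given $f\in\ell^2(X,m)$, I define the truncations $f_n=f\cdot 1_{K_n}\in C_c(X)$. Then
$$
\|f-f_n\|^2=\sum_{k>n}|f(x_k)|^2 m(x_k),
$$
which is the tail of the convergent series $\sum_{k}|f(x_k)|^2m(x_k)=\|f\|^2<\infty$. This tail tends to $0$ as $n\to\infty$, so $f_n\to f$ in $\ell^2(X,m)$, and density follows.

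There is no real obstacle here. The only point needing care is that the sum defining the norm is a sum of nonnegative terms, so it is independent of the ordering of $X$, and the tail estimate is legitimate for the chosen enumeration.

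An alternative route is to check that the orthogonal complement of $C_c(X)$ is trivial: if $(g\mid 1_{\{x\}})=g(x)m(x)=0$ for all $x$, then $g=0$ because $m>0$. This gives the same conclusion.
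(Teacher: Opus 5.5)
Your proof is correct and is essentially the paper's argument: truncate $f$ to an increasing sequence of finite sets exhausting $X$ and note that $\|f-f_n\|^2$ is the tail of the convergent series $\sum_{x}|f(x)|^2m(x)$. The paper takes the $K_n$ to be finite connected sets, but connectedness is not used here, so your plain enumeration works just as well.
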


\begin{proof}
Let $f\in \ell^2(X,m)$. Let $(K_n)_{n\in \Bbb N}$ be a finite exhaustion of $(X, b, m)$, i.e. $K_n\subset X$ is finite and connected,  $K_{n}\subset K_{n+1}$ for any $n\in \Bbb N$ and $\cup_{n\in \Bbb N} K_n=X$. Let $f_n:=f\mid_{K_n}$, $n\in \Bbb N$. Then
$$
\|f-f_n\|^2=\sum_{x\in X\setminus K_n}|f(x)|^2m(x)\to 0,
$$
as $n\to\infty$.
\end{proof}

\begin{theorem}[Green's formula]
Let $(X,b,m)$ be a complex-weighted graph. Then 
\begin{enumerate}[leftmargin=*]
\item[{\em (a)}]
For all $f\in~\D(X)$ and $\phi \in C_c(X)$ the following holds
\begin{align*}
&\sum_{x\in X}\L f(x) \phi(x) m(x)=\sum_{x\in X}\L \phi(x) f(x)m(x)\\
&=\dfrac{1}{2}\sum_{x,y\in X}b(x,y)(\phi(x)-\phi(y))(f(x)-f(y)).
\end{align*}

\item[{\em (b)}]
For all $f\in \D\cap \ell^2(X,m)$ and $\phi \in C_c(X)$ the following holds
\begin{equation*}
\mathcal Q(\phi, f)=(\L \phi\mid f).
\end{equation*}
If, in addition, $\L f\in  \ell^2(X,m)$, then
\begin{equation*}
\mathcal Q(f,\phi)=( \L f\mid \phi).
\end{equation*}
\end{enumerate}
\end{theorem}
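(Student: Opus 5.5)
Part (a) reduces to a finite computation followed by a symmetrization. Let $K=\supp\phi$, which is finite, and let $K'=K\cup\{y\mid y\sim x \text{ for some } x\in K\}$. By local finiteness (Assumption \ref{a:graph}), $K'$ is finite.

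For each $x$ the sum defining $\L f(x)$ is finite, because only neighbours of $x$ contribute. Hence
\[
\sum_{x}\L f(x)\phi(x)m(x)=\sum_{x\in K}\sum_{y}(f(x)-f(y))b(x,y)\phi(x)
\]
is a finite double sum, and all terms with $(x,y)\notin K'\times K'$ vanish. So every manipulation below involves only finitely many terms and no convergence issue arises.

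We rename $x\leftrightarrow y$ and use the symmetry $b(x,y)=b(y,x)$, which holds because edges are unordered pairs. The same finite sum then equals $\sum_{x,y}(f(y)-f(x))b(x,y)\phi(y)$. Averaging the two expressions gives
\[
\tfrac12\sum_{x,y}b(x,y)(\phi(x)-\phi(y))(f(x)-f(y)).
\]
Only pairs with at least one endpoint in $K$ contribute, so this is again a finite sum. The right-hand side is symmetric in the roles of $f$ and $\phi$. Running the identical computation with $f$ and $\phi$ interchanged therefore gives $\sum_x\L\phi(x)f(x)m(x)$, since $\L\phi$ is supported in $K'$. This proves (a).

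The hypothesis $f\in\D(X)$ is not really needed here, because the sums are finite. It does guarantee that the right-hand side agrees with the absolutely convergent expression that defines $\Q$.

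For (b), apply (a) with $\overline f$ in place of $f$. This is allowed since $\overline f\in\D(X)$, because $|\overline f(x)-\overline f(y)|=|f(x)-f(y)|$. We obtain
\[
\sum_x\L\phi(x)\overline{f(x)}m(x)=\tfrac12\sum_{x,y}b(x,y)(\phi(x)-\phi(y))\overline{(f(x)-f(y))}=\Q(\phi,f).
\]
The left side is $(\L\phi\mid f)$, which is well defined because $\L\phi\in C_c(X)$.

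For the second identity, apply the first equality of (a) with $\overline\phi$ in place of $\phi$. Its left side is $\sum_x\L f(x)\overline{\phi(x)}m(x)=(\L f\mid\phi)$. This inner product is finite because $\phi$ has finite support; the assumption $\L f\in\ell^2$ only ensures it is a genuine $\ell^2$ inner product. The right side is $\tfrac12\sum b(x,y)\overline{(\phi(x)-\phi(y))}(f(x)-f(y))=\Q(f,\phi)$.

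The only delicate point is to justify interchanging and relabeling the sums in (a). I would handle this by restricting explicitly to the finite set $K'\times K'$ before any rearrangement.
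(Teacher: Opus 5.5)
Your proof is correct and follows the same route as the paper. The paper only observes that local finiteness makes every sum finite and then appeals to a direct computation; your restriction to the finite set $K'\times K'$, the symmetrization using $b(x,y)=b(y,x)$, and the substitution of $\overline f$ and $\overline\phi$ for part (b) are exactly that computation written out.
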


\begin{proof}
Due to the locally finitness of graph all the sums are finite. The equalities follow by direct computations, see e.g \cite[Lemma 7]{Muranova1}. 
\end{proof}

\subsection{Dirichlet Laplacian and corresponding form on infinite graphs}\label{section::DirichletLaplacian}

 In this section we introduce the Dirichlet Laplacian on a  complex-weighted graph. The approach is similar to the one for  the real-weighted Dirichlet Laplacian, see \cite{KellerLenzWojcBook}.  
 
 Let $(X,b,m)$ be an infinite graph and $\mathcal Q$ be its energy form. We denote by $\D_0$  the subspace of functions $f\in C(X)$ for which
there exists a sequence $(\phi_n)_{n\in \Bbb N}\subset C_c(X)$ with $\phi_n\to f$ pointwise and $\Re \mathcal Q(f-\phi_n)\to 0$ as $n\to \infty$. We say that {\em the  sequence $(\phi_n)_{n\in \Bbb N}\subset C_c(X)$ approximates $f$ in $\D_0$}.

\begin{remark}\label{rem::d0ReQ}
Note that $\Re\mathcal Q(f-\phi_n)\to 0$ as $n\to \infty$ is equivalent to $\mathcal Q(f-\phi_n)\to 0$ as $n\to \infty$ due to the sectoriality of $\Q$,  Lemma \ref{lem::formalQmapstosector}.
\end{remark}

Obviously, $C_c(X)\subset \D_0$. Moreover,

\begin{lemma}
For any complex-weighted graph $(X,b,m)$ the following inclusion of functional spaces holds: $\D_0 \subset \D$.
\end{lemma}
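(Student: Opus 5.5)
Fix $f\in\D_0$ and a sequence $(\phi_n)_{n\in\Bbb N}\subset C_c(X)$ that approximates $f$ in $\D_0$. The plan is to write $f=(f-\phi_n)+\phi_n$ for one fixed large $n$ and show that each piece lies in $\D$. By the third description of $\D(X)$, it suffices to bound
$$
\sum_{x,y\in X}|f(x)-f(y)|^2\,\Re b(x,y).
$$

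The first step is to interpret the hypothesis correctly. By definition, $\Re\Q(f-\phi_n)$ is only meaningful for $f-\phi_n\in\D$. However, the series $\frac12\sum_{x,y}|g(x)-g(y)|^2\Re b(x,y)$ has nonnegative terms, so it is defined in $[0,\infty]$ for every $g\in C(X)$. I will read the condition $\Re\Q(f-\phi_n)\to0$ in this extended sense. It then forces $\Re\Q(f-\phi_n)<\infty$ for all sufficiently large $n$, so $f-\phi_n\in\D$ for such $n$. This bookkeeping point is the only subtle step; the remainder is routine.

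Next, $\phi_n\in\D$ for every $n$. Since $\phi_n$ has finite support $S$, the term $|\phi_n(x)-\phi_n(y)|^2\Re b(x,y)$ vanishes unless $x\in S$ or $y\in S$, and $b(x,y)\neq0$ only if $x\sim y$. Local finiteness (Assumption \ref{a:graph}) therefore leaves only finitely many nonzero terms, so the sum is finite.

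Finally, I combine the two pieces using the elementary inequality $|u+v|^2\le 2|u|^2+2|v|^2$, applied with $u=(f-\phi_n)(x)-(f-\phi_n)(y)$ and $v=\phi_n(x)-\phi_n(y)$. Multiplying by $\Re b(x,y)\ge0$ and summing gives
$$
\Re\Q(f)\le 2\,\Re\Q(f-\phi_n)+2\,\Re\Q(\phi_n)<\infty
$$
for one fixed large $n$. Hence $f\in\D$. Equivalently, the seminorm $(\Re\Q(\cdot))^{1/2}$ satisfies the triangle inequality, and that alone suffices.
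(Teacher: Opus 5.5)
Your proof is correct and uses the same two ingredients as the paper's. The first is a single index $N$ with $\Re\Q(f-\phi_N)$ finite, read (as the paper also tacitly does) as a series of nonnegative terms in $[0,\infty]$. The second is local finiteness to control the finitely supported part. The only difference is bookkeeping. The paper restricts the energy sum of $f$ to pairs $x,y\in X\setminus B_1(\supp\phi_N)$, where the differences of $f$ and $f-\phi_N$ coincide, and leaves the finitely many remaining terms implicit. You instead split $f=(f-\phi_n)+\phi_n$ and use $|u+v|^2\le 2|u|^2+2|v|^2$, which makes that finiteness explicit through $\phi_n\in\D$.
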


\begin{proof}
Let $f\in\D_0$. Then there exist $(\phi_n)_{n\in \Bbb N}\subset C_c(X)$ with $\phi_n\to f$ pointwise and $\Re \mathcal Q(f-\phi_n)\to 0$ as $n\to \infty$. Hence, for any $\varepsilon>0$ there exist $N\in \Bbb N$ such that $\Re \mathcal Q(f-\phi_N)<\varepsilon$, i.e
$$
\dfrac12\sum_{x,y\in X\setminus B_1(\supp \phi_N)}|f(x)-f(y)|^2\Re b(x,y)\le \Re \mathcal Q(f-\phi_N)<\varepsilon,
$$
where
$$
 B_1(\supp \phi_N)=\{x\in X\;\mid\; \mbox{there exists }y\in \supp \phi_N \mbox{ such that }x\sim y\}.
$$
Hence, $\Re \mathcal Q(f)<\infty$ and $f\in\D$.
\end{proof}

The norm, associated to the form $\mathcal Q$, is defined for any $f\in \mathcal D$ as
$$
\|f\|_\mathcal Q=(\Re \mathcal Q(f)+\|f\|^2)^{\frac{1}{2}}.
$$

\begin{theorem}\label{thm::dol2cQ}
For any graph  $(X,b,m)$ the following holds
$$
\mathcal D_0\cap \ell^2(X,m)=\overline{C_c(X)}^{\|\cdot\|_\mathcal Q}.
$$
\end{theorem}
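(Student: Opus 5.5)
The plan is to reduce everything to the real, nonnegative form $\Re\mathcal Q$. Note that $\Re\mathcal Q$ is the energy form of the real-weighted graph $(X,\Re b,m)$, with $\Re b>0$ on edges because $b\in\Bbb H_r$. Both $\D_0$ and the norm $\|\cdot\|_\mathcal Q$ are defined only through $\Re\mathcal Q$. So the statement is really a statement about a graph with positive weights, and I would prove it directly, in the spirit of \cite{KellerLenzWojcBook}, by proving the two inclusions.

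\emph{Inclusion $\supseteq$.} Let $\phi_n\in C_c(X)$ be $\|\cdot\|_\mathcal Q$-Cauchy, and let $f$ be its limit in $\ell^2(X,m)$. Since $m(x)>0$, $\ell^2$-convergence gives $\phi_n\to f$ pointwise. Fatou's lemma applied to the sum defining $\Re\mathcal Q$ gives
$$
\Re\mathcal Q(f-\phi_n)\le\liminf_{k\to\infty}\Re\mathcal Q(\phi_k-\phi_n).
$$
The right-hand side is small for large $n$ by the Cauchy property. Hence $f\in\D$ and $\Re\mathcal Q(f-\phi_n)\to 0$, so $f\in\D_0\cap\ell^2(X,m)$.

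\emph{Inclusion $\subseteq$.} Take $f\in\D_0\cap\ell^2$, approximated in $\D_0$ by $(\phi_n)$. The difficulty is that $\phi_n$ need not converge to $f$ in $\ell^2$. I would proceed in three steps.

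\textbf{Step 1: reduce to $f\ge 0$.} By Lemma \ref{lem::complexf} and the pointwise inequality $|\Re g(x)-\Re g(y)|\le|g(x)-g(y)|$, the sequence $\Re\phi_n$ approximates $\Re f$ in $\D_0$, and similarly for the imaginary parts. The map $t\mapsto t_\pm$ is a normal contraction, so $(\Re\phi_n)_\pm$ approximates $(\Re f)_\pm$. Thus $f$ splits into four nonnegative functions, each in $\D_0\cap\ell^2$, and it suffices to treat $f\ge0$.

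\textbf{Step 2: truncate.} Put $\psi_n=(\phi_n\vee 0)\wedge f\in C_c(X)$. Then $0\le\psi_n\le f$ and $|\psi_n-f|\le|\phi_n-f|$ pointwise. Dominated convergence, with dominating function $f^2m$, therefore gives $\psi_n\to f$ in $\ell^2$.

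\textbf{Step 3: control the energy.} This is the main obstacle. A truncation of this kind does not obviously satisfy $\Re\mathcal Q(f-\psi_n)\le\Re\mathcal Q(f-\phi_n)$. What I can get is boundedness: $\Re\mathcal Q(\psi_n)$ is bounded, because truncating by $0$ and by $f$ increases energy by at most a term controlled by $\Re\mathcal Q(\phi_n)+\Re\mathcal Q(f)$, via an edgewise estimate for $\min$ and $\max$. So $(\psi_n)$ is bounded in the Hilbert space $\big(\overline{C_c(X)}^{\|\cdot\|_\mathcal Q},\ \Re\mathcal Q(\cdot,\cdot)+(\cdot\mid\cdot)\big)$.
\begin{itemize}
\item Pass to a weakly convergent subsequence. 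Its weak limit must be $f$, since weak convergence implies pointwise convergence and $\psi_n\to f$ pointwise.
\item By Mazur's lemma (or Banach--Saks), suitable convex combinations of the $\psi_n$ lie in $C_c(X)$ and converge to $f$ in $\|\cdot\|_\mathcal Q$.
\end{itemize}
This shows that $f\in\overline{C_c(X)}^{\|\cdot\|_\mathcal Q}$. As a fallback, if that is more convenient, one could instead quote the corresponding real-weighted result for $(X,\Re b,m)$ from \cite{KellerLenzWojcBook}, since both sides of the claimed identity depend only on $\Re b$ and $m$.
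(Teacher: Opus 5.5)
Your proposal is correct. At the top level it follows the same skeleton as the paper. The paper proves the inclusion $\overline{C_c(X)}^{\|\cdot\|_\mathcal Q}\subset\mathcal D_0\cap\ell^2(X,m)$ directly. For the reverse inclusion it splits $f$ into $\Re f,\Im f$ via Lemma~\ref{lem::fRefImfH}, uses $\Re\mathcal Q_b=\mathcal Q_{\Re b}$, and then simply invokes the proof of \cite[Theorem 1.19]{KellerLenzWojcBook} for $(X,\Re b,m)$. That is exactly your fallback.

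What you add is a self-contained proof of this real-weighted step:
\begin{itemize}
\item the splitting into positive and negative parts;
\item the truncation $(\phi_n\vee 0)\wedge f$, which supplies the missing $\ell^2$-bound;
\item the edgewise $\min/\max$ estimate, which gives a uniform energy bound;
\item weak compactness, which turns ``pointwise convergence with bounded $\|\cdot\|_\mathcal Q$-norm'' into membership in the closure.
\end{itemize}
Your Fatou argument in the first inclusion is also a bit more careful than the paper's, which tacitly takes the closure inside $\mathcal D\cap\ell^2(X,m)$. Yours shows that every $\|\cdot\|_\mathcal Q$-Cauchy sequence in $C_c(X)$ has its $\ell^2$-limit in $\mathcal D$, with convergence in $\|\cdot\|_\mathcal Q$. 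This is what makes $\overline{C_c(X)}^{\|\cdot\|_\mathcal Q}$ a Hilbert space, as you need in Step 3. The paper's route is shorter; yours shows explicitly where the $\ell^2$-control comes from.

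Two small remarks.

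\emph{Step 1.} The normal-contraction property does not by itself give $\Re\mathcal Q\bigl((\Re f)_+-(\Re\phi_n)_+\bigr)\to 0$. It bounds the increments of $C(u)$ by those of $u$, but not the increments of $C(u)-C(v)$ by those of $u-v$. For instance, if $u(x)=1$, $u(y)=-1$ and $v=u-\delta$ with $\delta>0$ small, then $u-v$ has zero increment on $\{x,y\}$, while $u_+-v_+$ has increment $\delta$. The claim is still true, by a generalized dominated convergence argument. But you do not need it: Steps 2 and 3 only use that the real approximants converge pointwise and have bounded energy, and that does follow from the contraction property.

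\emph{Step 3.} Mazur's lemma is superfluous. Point evaluations are continuous on $\overline{C_c(X)}^{\|\cdot\|_\mathcal Q}$, so the weak limit, which you identify with $f$, already lies in this closed subspace.
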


Before we prove the theorem, we prove the following lemma:
\begin{lemma}\label{lem::fRefImfH}
Let $f\in C(X)$ and $H \in \Big\{\mathcal D_0, \ell^2(X,m), \overline{C_c(X)}^{\|\cdot\|_\mathcal Q}\Big\}$. Then
$f\in H$ if and only if $\Re f, \Im f\in H$.
\end{lemma}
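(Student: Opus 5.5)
The proof treats the three choices of $H$ one by one. In each case the direction ``if'' comes from linearity: each $H$ is a complex vector space and $f=\Re f+i\Im f$. For the converse, two elementary facts for real $\phi\in C(X)$ and complex $g$ do most of the work:
$$
|\Re g(x)-\Re g(y)|^2\le |g(x)-g(y)|^2,\qquad |\Im g(x)-\Im g(y)|^2\le |g(x)-g(y)|^2 .
$$
Together with $\Re b\ge 0$ and Lemma \ref{lem::complexf}, applied to $g=f-\phi$, they give
$$
\Re\mathcal Q(\Re f-\Re\phi)\le \Re\mathcal Q(f-\phi),\qquad \Re\mathcal Q(\Im f-\Im\phi)\le \Re\mathcal Q(f-\phi).
$$
Here I use $\Re(f-\phi)=\Re f-\Re \phi$, and similarly for the imaginary parts.

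For $H=\ell^2(X,m)$ the claim is immediate. We have $|\Re f|^2,|\Im f|^2\le |f|^2$, and conversely $|f|^2=|\Re f|^2+|\Im f|^2$.

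For $H=\D_0$, take $(\phi_n)\subset C_c(X)$ approximating $f$ in $\D_0$. Then $\Re\phi_n$ and $\Im\phi_n$ lie in $C_c(X)$ and converge pointwise to $\Re f$ and $\Im f$. By the inequality above, $\Re \mathcal Q(\Re f-\Re\phi_n)\to0$ and $\Re \mathcal Q(\Im f-\Im\phi_n)\to0$. So $\Re f,\Im f\in\D_0$. For the ``if'' direction, take approximating sequences $\psi_n\to\Re f$ and $\chi_n\to\Im f$. Then $\phi_n=\psi_n+i\chi_n$ converges pointwise to $f$, and Lemma \ref{lem::complexf} gives
$$
\mathcal Q(f-\phi_n)=\mathcal Q(\Re f-\psi_n)+\mathcal Q(\Im f-\chi_n),
$$
whose real part tends to $0$. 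Here $\Re f-\psi_n$ is real-valued, so it is exactly the real part of $f-\phi_n$. Alternatively, I can use that $\D_0$ is a vector space, which follows from $\Re\mathcal Q(g+h)^{1/2}\le \Re\mathcal Q(g)^{1/2}+\Re\mathcal Q(h)^{1/2}$. This holds because $\Re\mathcal Q$ is a nonnegative quadratic form with weights $\Re b\ge0$.

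For $H=\overline{C_c(X)}^{\|\cdot\|_\mathcal Q}$, the argument is the same as for $\D_0$ with pointwise convergence replaced by $\ell^2$ convergence. If $\|f-\phi_n\|_\mathcal Q\to 0$, then combining the $\ell^2$ estimate with the energy estimate gives $\|\Re f-\Re\phi_n\|_\mathcal Q\le \|f-\phi_n\|_\mathcal Q$, and likewise for the imaginary part. The converse follows because the closure is a linear subspace, $\|\cdot\|_\mathcal Q$ being a seminorm (indeed a norm).

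The only point needing care is checking that $\Re\mathcal Q$ restricted to real parts is dominated as claimed. This is where $\Re b\ge 0$ and the identity \eqref{eq::QfQRefQImf} are essential, and I expect no real obstacle beyond it.
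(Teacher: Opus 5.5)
Your proof is correct and follows essentially the paper's argument: the identity \eqref{eq::QfQRefQImf} (equivalently, your pointwise inequalities) together with $\Re b\ge 0$ shows that the real and imaginary parts of an approximating sequence approximate $\Re f$ and $\Im f$, and the reverse direction is linearity. One small point: in your direct ``if'' argument for $\D_0$, the identity $\mathcal Q(f-\phi_n)=\mathcal Q(\Re f-\psi_n)+\mathcal Q(\Im f-\chi_n)$ needs $\psi_n,\chi_n$ to be real-valued; you can arrange this by replacing them with their real parts using the first half of your argument (the paper states this explicitly), or you can simply use your vector-space argument instead.
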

\begin{proof}
\begin{itemize}[leftmargin=*]
\item
Let $H=\mathcal D_0$. 
Let $(\phi_n)_n\in C_c(X)$ approximates $f$ in $\D_0$. Firstly note, that
$\phi_n\to f$ pointwise implies $\Re \phi_n\to \Re f$ and $\Im \phi_n\to \Im f$. 
Secondly, by Lemma \ref{lem::complexf}\eqref{eq::QfQRefQImf} we have
$$
\Re \mathcal Q(f-\phi_n)=\Re \mathcal Q(\Re f-\Re \phi_n)+\Re \mathcal Q(\Im f-\Im \phi_n),
$$
and, since all summands are positive and $\Q$ is sectorial, we get
 $\Re f, \Im f \in~D_0$. Moreover, they can be approximated in $\mathcal D_0$ by real-valued functions. The other direction follows from the fact that $\D_0$ is a vector space.
\item
Let $H=\ell^2(X,m)$. Then we obtain
$$
\|f\|^2=\sum_{x\in X}|f(x)|^2m(x)=\sum_{x\in X}(|\Re f(x)|^2+|\Im f(x)|^2)m(x)
$$
from where the statement follows.
\item
Let $H=\overline{C_c(X)}^{\|\cdot\|_\mathcal Q}$.
By Lemma \ref{lem::complexf}\eqref{eq::QfQRefQImf}  and definition of $\ell^2$-norm we get
\begin{align*}
&\Re \mathcal Q(f-\phi_n)+\|f-\phi_n\|^2=\Re \mathcal Q(\Re f-\Re \phi_n)+\Re \mathcal Q(\Im  f-\Im \phi_n)\\
&+\|\Re f-\Re \phi_n\|^2+\|\Im f-\Im \phi_n\|^2,
\end{align*}
from where the statement follows due to the definition of $\|\cdot\|_\mathcal Q$ and positivity of all summands. 
\end{itemize}
\end{proof}

\begin{proof}[Proof of Theorem \ref{thm::dol2cQ}]
Let $f\in \overline{C_c(X)}^{\|\cdot\|_\mathcal Q}$. Then there exists a sequence  $(\phi_n)_{n\in \Bbb N}\subset C_c(X)$ such that
$$
\|f-\phi_n\|_\mathcal Q=(\Re \mathcal Q(f-\phi_n)+\|f-\phi_n\|^2)^{\frac{1}{2}}\to 0 \mbox{ as }n\to\infty.
$$
Therefore, $\phi_n\to f$ in $\ell^2(X,m)$ and, hence, $f\in \ell^2(X,m)$ and $\phi_n\to f$ pointwise. Moreover, $\Re \mathcal Q(f-\phi_n)\to 0$ and $f\in \mathcal D_0$. From all the above follows $\overline{C_c(X)}^{\|\cdot\|_\mathcal Q}\subset \mathcal D_0\cap \ell^2(X,m)$.

Let $f\in D_0\cap \ell^2(X,m)$. Then, by Lemma \ref{lem::fRefImfH} we obtain that $\Re f,\Im f \in D_0\cap \ell^2(X,m)$ and can be approximated in $\D_0$ by real-valued functions. Hence, since $\Re \Q_b(\cdot)=\Q_{\Re b}(\cdot)$ one can apply the reasoning for positive forms on real-weighted graph $(X,\Re b,m)$, see  \cite[proof of Theorem 1.19]{KellerLenzWojcBook}, to show that $\Re f,\Im f \in \overline{C_c(X)}^{\|\cdot\|_\mathcal Q}$ which implies $f \in \overline{C_c(X)}^{\|\cdot\|_\mathcal Q}$ by Lemma~\ref{lem::fRefImfH}. Therefore, $ \mathcal D_0\cap \ell^2(X,m)\subset\overline{C_c(X)}^{\|\cdot\|_\mathcal Q}$ and the proof is finished.

\end{proof}
We define the form $Q^{(D)}$ as a restriction of the form $\Q$ to $\mathcal D_0\cap \ell^2(X,m)$. We call $Q^{(D)}$ the {\em form corresponding to the Dirichlet Laplacian}.
\begin{corollary}
The form $Q^{(D)}$ with the domain $D(Q^{(D)})=\mathcal D_0\cap \ell^2(X,m)$ is a densely defined closed sectorial form in $\ell^2(X,m)$.
\end{corollary}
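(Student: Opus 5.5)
We need to verify four properties of $Q^{(D)}$ on $D(Q^{(D)})=\mathcal D_0\cap \ell^2(X,m)$: that it is densely defined, sectorial, closed, and (implicitly) a well-defined sesquilinear form there.

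\emph{Well-definedness and density.} The domain is a vector space, being an intersection of vector spaces. Since $\mathcal D_0\subset\mathcal D$ by the lemma above, $\mathcal Q(f,g)$ is defined for all $f,g$ in the domain. Density follows because $C_c(X)\subset \mathcal D_0\cap\ell^2(X,m)$ and $C_c(X)$ is dense in $\ell^2(X,m)$ by the density lemma above.

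\emph{Sectoriality.} This is inherited directly from Lemma \ref{lem::formalQmapstosector}. For every $f$ in the domain we have $\Re Q^{(D)}(f)\ge 0$ and $|\Im Q^{(D)}(f)|\le c\,\Re Q^{(D)}(f)$. So the numerical range lies in the sector $\{z : |\arg z|\le \arctan c\}$, with vertex $0$.

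\emph{Closedness.} For a sectorial form, closedness means that the domain equipped with $\|\cdot\|_{\mathcal Q}=(\Re\mathcal Q(\cdot)+\|\cdot\|^2)^{1/2}$ is complete. Equivalently, whenever $f_n\to f$ in $\ell^2$ and $\Re\mathcal Q(f_n-f_k)\to 0$, we have $f\in D(Q^{(D)})$ and $\Re\mathcal Q(f_n-f)\to0$.

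By Theorem \ref{thm::dol2cQ}, the domain equals $\overline{C_c(X)}^{\|\cdot\|_{\mathcal Q}}$. That is, it is the closure of $C_c(X)$ inside the normed space $(\mathcal D\cap\ell^2(X,m),\|\cdot\|_{\mathcal Q})$. It therefore suffices to show that this ambient space is complete, since a closed subspace of a complete space is complete.

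So take a $\|\cdot\|_{\mathcal Q}$-Cauchy sequence $(f_n)$ in $\mathcal D\cap\ell^2$.
\begin{enumerate}
\item It is Cauchy in $\ell^2$, so $f_n\to f$ in $\ell^2$. Since $m>0$, this also gives pointwise convergence.
\item Apply Fatou's lemma to the sum defining $\Re\mathcal Q(f_n-f_k)=\frac12\sum|\,(f_n-f_k)(x)-(f_n-f_k)(y)|^2\Re b(x,y)$, letting $k\to\infty$. This yields $\Re\mathcal Q(f_n-f)\le\liminf_k\Re\mathcal Q(f_n-f_k)$, which is small for large $n$.
\item Hence $f_n-f\in\mathcal D$, so $f\in\mathcal D$, and $\|f_n-f\|_{\mathcal Q}\to0$.
\end{enumerate}
Alternatively, one can note that $\Re\mathcal Q_b=\mathcal Q_{\Re b}$ and quote closedness of the real-weighted Dirichlet form from \cite{KellerLenzWojcBook}, using Lemma \ref{lem::fRefImfH} to pass between real and imaginary parts.

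\emph{Main obstacle.} The only delicate step is the Fatou argument, that is, lower semicontinuity of $\Re\mathcal Q$ under pointwise convergence. It works because $\Re\mathcal Q$ is a sum of nonnegative terms. Sectoriality ensures that controlling $\Re\mathcal Q$ suffices: by \eqref{eq::CSforsect} the full form $\mathcal Q$ is then controlled as well.
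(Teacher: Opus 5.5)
Your proof is correct and follows the paper's route: sectoriality comes from Lemma~\ref{lem::formalQmapstosector}, density from $C_c(X)\subset D(Q^{(D)})$, and closedness from Theorem~\ref{thm::dol2cQ}. Your Fatou argument for the completeness of $(\mathcal D\cap\ell^2(X,m),\|\cdot\|_{\mathcal Q})$ makes explicit what the paper's one-line appeal to Theorem~\ref{thm::dol2cQ} leaves implicit, and it is the same lower-semicontinuity argument the paper later uses to show that $Q^{(N)}$ is closed.
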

\begin{proof}
The sectoriality follows from Lemma \ref{lem::formalQmapstosector}. The density is clear, since $C_c(X)\subset D(Q^{(D)})$ is dense in $\ell^2(X,m)$. Finally, the form is closed due to Theorem~\ref{thm::dol2cQ}.
\end{proof}

By the general theory of sectorial forms and operators \cite{Kato}, the form $Q^{(D)}$ defines a unique $m$-sectorial operator $L^{(D)}$ with a domain $D(L^{(D)})\subset D(Q^{(D)})$, see \cite[p. 322, Theorem 2.1]{Kato}.  We call the operator $L^{(D)}$ the {\em Dirichlet Laplacian}.

\begin{lemma}\label{lem::LK=mathcalL}
Let $(X,b,m)$ be a complex-weighted graph. Let $L^{(D)}$ be its Dirichlet Laplacian. Then
$$
L^{(D)}f(x)=\mathcal Lf(x),
$$
for all $f\in D(L^{(D)})$, for any $x\in X$.

\end{lemma}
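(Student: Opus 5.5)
We test the defining relation of $L^{(D)}$ against the delta functions $\delta_x\in C_c(X)$. By Kato's representation theorem \cite[p. 322, Theorem 2.1]{Kato}, $f\in D(L^{(D)})$ means $f\in D(Q^{(D)})$, and there is a unique $g=L^{(D)}f\in\ell^2(X,m)$ with
$$
Q^{(D)}(f,\phi)=(g\mid \phi)\quad\text{for all }\phi\in D(Q^{(D)}).
$$
Since $C_c(X)\subset D(Q^{(D)})$, we may take $\phi=\delta_x$ for a fixed $x\in X$. The right-hand side is then $(g\mid\delta_x)=g(x)m(x)$.

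Next we compute the left-hand side directly from the definition of $\Q(f,\delta_x)$. The only nonzero terms in the double sum are those with $x$ among the two arguments. This gives
\begin{align*}
\Q(f,\delta_x)&=\frac12\sum_{y\in X}(f(x)-f(y))b(x,y)+\frac12\sum_{y\in X}(f(y)-f(x))(-1)b(y,x)\\
&=\sum_{y\in X}(f(x)-f(y))b(x,y)=m(x)\,\L f(x).
\end{align*}
Here we used the symmetry $b(x,y)=b(y,x)$, which holds because edges are unordered pairs, and that $\delta_x$ is real. Both sums are finite by local finiteness (Assumption \ref{a:graph}), so no convergence issues arise. This identity is also the special case $\phi=\delta_x$ of Green's formula (a), up to conjugating $\phi$, which is harmless for real $\phi$.

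Comparing the two sides gives $m(x)\L f(x)=m(x)\,L^{(D)}f(x)$. Since $m(x)>0$, we obtain $L^{(D)}f(x)=\L f(x)$ for every $x$.

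The only step needing care is matching the conjugation convention: Kato's theorem is stated with the form linear in the first argument and the test function in the second, consistent with the paper's $\Q(f,g)$ conjugating $g$. With a real test function $\delta_x$ the issue disappears. Note that we do not need $\L f\in\ell^2$ a priori; it follows, since $\L f$ equals $L^{(D)}f$.
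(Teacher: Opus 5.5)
Your proof is correct and is essentially the paper's argument: you test the defining identity $(L^{(D)}f\mid\phi)=\mathcal Q(f,\phi)$ against a point mass and then evaluate $\mathcal Q(f,\phi)$. The paper uses $\phi=1_x/m$ and Green's formula, whereas you use $\delta_x$, compute the sum directly via the symmetry of $b$ and local finiteness, and then divide by $m(x)>0$.
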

\begin{proof}
 By the definition of an operator, associated with a form, for any $f\in D(L^{(D)}),$ and $\phi\in C(X)\subset D(Q^{(D)})$ we have 
$$
(L^{(D)}f\;\mid\; \phi)=Q^{(D)}(f,\phi)=\mathcal Q(f,\phi)=\sum_{x\in X}\L f(x)\overline {\phi(x)}m(x),
$$
where the last equality is due to the Green's formula. Now taking $\phi:=1_x/m$ the statement follows.
\end{proof}

By Lumer-Phillips theorem for generation of holomorphic semigroups (see e. g. \cite[p. 492, Theorem 1.24]{Kato}) we obtain that $-L^{(D)}$ generates a contractive holomorphic $C_0$-semigroup $T(t)$ of angle $\theta=\pi/2-\arg c$, where $c$ is the sectoriality constant \eqref{bsect}. Then by integral representation of the resolvent (see, e.g. \cite[p. 55 Theorem 1.10]{EngelNagel}) the following holds
\begin{equation*}\label{eq::resolvent}
(L^{(D)}+\alpha)^{-1}=\int_0^\infty e^{-\alpha t}T(t)dt,
\end{equation*}
for all $\alpha\in \Bbb H_r$. We will elaborate more on this equality in the next sections.

\section{Finite approximations of complex-weighted graphs}
\label{sect::finiteapprox}
Let $(X,b,m)$ be an infinite complex-weighted graph and let $K\subset X$ be a finite connected subset, fixed for this section. Let us denote 
$$
C_c(K)=\{f\in C_c(X)\;\mid\; \supp f\subset K\},
$$
and 
$$
C(K)=\{f: K\to \Bbb C\}.
$$
We denote by $\ell^2(K,m_K)$ a Hilbert space
of functions $\{f\in C(K)\}$ with the scalar product 
$$
(f\mid g)_K=\sum_{x\in K}f(x)\overline{g(x)}m(x).
$$

Let us define a sesquilinear form $\widehat Q_K$ on $\ell^2(K,m_k)$ by
$$
\widehat Q_K(f,g)=\mathcal Q(i_Kf,i_Kg),
$$
where   $i_K:C(K)\to C_c(K)$ is an extension of $f\in C(K)$ to $X$ by setting $i_Kf$ be identically zero outside $K$. It follows from sectoriality of $\Q$, Lemma \ref{lem::formalQmapstosector}, that $\widehat Q_K$ is a sectorial form. We denote a sectorial operator, associated to $\widehat  Q_K$, by $\widehat L_K$  and call it the  {\em (Dirichlet) Laplacian on $K$}. We denote by $D(\widehat L_K)$ the domain of $\widehat L_K$. The operator $\widehat L_K$ is $m$-sectorial and, due too the finitness of $K$,  $D(\widehat L_K)=\ell^2(K,m_K)$  (see, e.g. \cite[p. 322, Theorem 2.1]{Kato}). 

Following the same outline as in the proof of Lemma \ref{lem::LK=mathcalL} we can show, that 
$$
\widehat L_Kf(x)=(\mathcal L\circ i_K)(f)(x),
$$  
for all $f\in \ell^2(K,m_K)$ and any $x\in K$.

Now we are in the position to show that the operators $\widehat L_K+\alpha$ are invertible for all $\alpha\ge 0$ (Theorem \ref{LKinvert}). We start with the following lemma.
\begin{lemma}\label{lem::dirprr}
Let $(X,b,m)$ be a complex-weighted graph, $K\subset X$ is finite connected, $x\in K$. Let $\alpha\ge 0$. Then the problem
\begin{equation}\label{prob::dirpralpha}
\begin{cases}
v_\alpha(x)=r,\\
(\L +\alpha)v_\alpha=0\mbox{ on  }K\setminus\{x\},\\
v_\alpha= 0 \mbox{ on  }X\setminus K,
\end{cases}
\end{equation}
has a unique solution $v_\alpha\in C(X)$ for any $r\in \Bbb R$. Moreover, 
\begin{equation}\label{eq::aLplusalphaQ}
r\cdot (\L v_\alpha+\alpha v_\alpha)(x)m(x)=Q(v_\alpha)+\alpha \|v_\alpha\|^2
\end{equation}
 and the following estimate holds
\begin{equation}\label{est::Qvr}
|\mathcal Q(v_\alpha)|+\alpha \|v_\alpha\|^2\le (1+c^2)|r|^2\left(\sum_{\substack{y\in X:\\y\sim x}}|b(x,y)|+\alpha\cdot m(x)\right),
\end{equation}
where $c$ is the sectoriality constant \eqref{bsect}.
\end{lemma}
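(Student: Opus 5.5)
We would prove the lemma in three steps: existence and uniqueness through a finite linear system, identity \eqref{eq::aLplusalphaQ} through Green's formula, and estimate \eqref{est::Qvr} through a comparison with the test function $w:=r\cdot 1_x$.

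\emph{Existence and uniqueness.} Since $v_\alpha$ vanishes off $K$ and $v_\alpha(x)=r$ is prescribed, the unknowns are the values of $v_\alpha$ on $K\setminus\{x\}$. By local finiteness, $(\L+\alpha)v_\alpha(y)$ for $y\in K$ is a finite sum. So \eqref{prob::dirpralpha} is a square linear system with $|K|-1$ equations in $|K|-1$ unknowns, and it suffices to show that the homogeneous problem ($r=0$) has only the trivial solution. Let $v$ solve the problem with $r=0$. It has finite support, so Green's formula (part (a)) with $\phi=\overline v$ gives
$$
\mathcal Q(v)+\alpha\|v\|^2=\sum_{y\in K}(\L+\alpha)v(y)\,\overline{v(y)}\,m(y)=0 .
$$
The sum vanishes because $v(x)=0$ and $(\L+\alpha)v=0$ on $K\setminus\{x\}$. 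Taking real parts yields $\Re\mathcal Q(v)=0$. Hence $|v(y)-v(z)|^2\Re b(y,z)=0$ for all $y,z$, and since $\Re b>0$ on edges, $v$ is constant along every edge. The graph is connected and $v=0$ outside the finite set $K$ (the graph being infinite), so $v\equiv 0$.

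\emph{Identity \eqref{eq::aLplusalphaQ}.} We apply the same Green's formula computation to $v_\alpha$ itself. Every term of $\sum_{y\in K}(\L+\alpha)v_\alpha(y)\overline{v_\alpha(y)}m(y)$ vanishes except the term $y=x$, which equals $(\L v_\alpha+\alpha v_\alpha)(x)\,\overline r\,m(x)$. Since $r\in\Bbb R$, we have $\overline r=r$, which gives \eqref{eq::aLplusalphaQ}.

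\emph{Estimate \eqref{est::Qvr}.} The function $v_\alpha-w$ vanishes at $x$ and off $K$, so Green's formula gives
$$
\mathcal Q(v_\alpha,v_\alpha-w)+\alpha(v_\alpha\mid v_\alpha-w)=0,
$$
that is,
$$
\mathcal Q(v_\alpha)+\alpha\|v_\alpha\|^2=\mathcal Q(v_\alpha,w)+\alpha(v_\alpha\mid w).
$$
Set $A:=\Re\mathcal Q(v_\alpha)+\alpha\|v_\alpha\|^2$ and $B:=\Re\mathcal Q(w)+\alpha\|w\|^2$. A direct computation gives
$$
B=|r|^2\Big(\sum_{y\sim x}\Re b(x,y)+\alpha m(x)\Big)\le |r|^2\Big(\sum_{y\sim x}|b(x,y)|+\alpha m(x)\Big).
$$
Taking absolute values of the right-hand side and using \eqref{eq::CSforsect} together with Cauchy--Schwarz in $\ell^2$ gives $A\le C_1\sqrt{AB}$, hence $A\le C_1^2B$. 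Finally, $|\mathcal Q(v_\alpha)|\le\sqrt{1+c^2}\,\Re\mathcal Q(v_\alpha)$ by sectoriality \eqref{bsect}, so the left-hand side of \eqref{est::Qvr} is at most $\sqrt{1+c^2}\,A$.

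\emph{Main obstacle.} The difficulty is obtaining exactly the constant $1+c^2$ rather than something like $(1+c)^2\sqrt{1+c^2}$. To sharpen it, we would split $\mathcal Q=\mathcal Q_{\Re b}+i\,\mathcal Q_{\Im b}$ into two Hermitian forms. Each satisfies its own Cauchy--Schwarz bound relative to $\mathcal Q_{\Re b}$, with constants $1$ and $c$ respectively. We would then combine the two bounds in quadrature, using $|\mathcal Q(v_\alpha,w)|^2\le(1+c^2)\,\Re\mathcal Q(v_\alpha)\,\Re\mathcal Q(w)$, which follows from Cauchy--Schwarz on the vector $(1,c)$. If this exact constant cannot be achieved, the argument still yields an estimate of the same form with a constant depending only on $c$. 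That weaker version is what is needed later for uniform bounds over an exhaustion.
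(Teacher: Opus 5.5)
Your existence/uniqueness argument and your derivation of \eqref{eq::aLplusalphaQ} match the paper's. Your identity $\mathcal Q(v_\alpha)+\alpha\|v_\alpha\|^2=\mathcal Q(v_\alpha,w)+\alpha(v_\alpha\mid w)$ with $w=r1_x$ is exactly the relation the paper uses: it is \eqref{eq::aLplusalphaQ} combined with Green's formula against $1_x$.

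\textbf{The gap is the constant.} The lemma asserts $1+c^2$, and your argument does not produce it. Your baseline gives $(1+c)^2\sqrt{1+c^2}$, and the proposed sharpening does not reach $1+c^2$ either.

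As written, the ``quadrature'' step is not valid. The numbers $\mathcal Q_{\Re b}(v_\alpha,w)$ and $\mathcal Q_{\Im b}(v_\alpha,w)$ are complex in general, since $v_\alpha$ is complex-valued. For complex $z_1,z_2$, $|z_1+iz_2|$ can be as large as $|z_1|+|z_2|$, so separate bounds with constants $1$ and $c$ only combine to $1+c$.

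The inequality $|\mathcal Q(v,w)|^2\le(1+c^2)\Re\mathcal Q(v)\Re\mathcal Q(w)$ is nevertheless true, for a different reason: apply $|b(y,z)|\le\sqrt{1+c^2}\,\Re b(y,z)$ edge by edge, then use a single Cauchy--Schwarz. Even granting it, you get $A\le(1+c^2)B$. The final conversion $|\mathcal Q(v_\alpha)|\le\sqrt{1+c^2}\,\Re\mathcal Q(v_\alpha)$ then costs one more factor, leaving $(1+c^2)^{3/2}$.

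\textbf{The missing idea} is to measure $v_\alpha$ in the $|b|$-weighted energy $E(f):=\frac12\sum_{y,z\in X}|f(y)-f(z)|^2|b(y,z)|$, as the paper does. Then the sectoriality constant is paid exactly once:
\begin{itemize}
\item $|\mathcal Q(v_\alpha,w)|\le\sqrt{E(v_\alpha)E(w)}$ with constant $1$;
\item $\Re\mathcal Q(v_\alpha)\ge(1+c^2)^{-1/2}E(v_\alpha)$;
\item $|\mathcal Q(v_\alpha)|\le E(v_\alpha)$ costs nothing.
\end{itemize}
Set $A':=E(v_\alpha)+\alpha\|v_\alpha\|^2$ and $B':=E(w)+\alpha\|w\|^2=|r|^2\bigl(\sum_{y\sim x}|b(x,y)|+\alpha m(x)\bigr)$. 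Taking real parts in your identity and applying Cauchy--Schwarz in $\Bbb R^2$ gives
\[
(1+c^2)^{-1/2}A'\le\Re\mathcal Q(v_\alpha)+\alpha\|v_\alpha\|^2\le\sqrt{E(v_\alpha)E(w)}+\alpha\|v_\alpha\|\,\|w\|\le\sqrt{A'B'}.
\]
Hence $|\mathcal Q(v_\alpha)|+\alpha\|v_\alpha\|^2\le A'\le(1+c^2)B'$, which is exactly \eqref{est::Qvr}.

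The paper's Young inequality with $\varepsilon=(1+c^2)^{-1/2}|r|^{-1}$ is the same optimization as your step $A\le C\sqrt{AB}$; the only real difference is the choice of energy. You are right that any constant depending only on $c$ would suffice for the later uniform bounds in the Green's function argument, since it stays locally bounded in $s$. But that proves a weaker lemma than the one stated.
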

The estimate \eqref{est::Qvr} is an important point, since we need it to relate Green's function and recurrence in Section \ref{section:recurrence}. Note, that this estimate does not depend on $K$.

\begin{proof}
Due to the Green's formula and \eqref{prob::dirpralpha}, if a solution $v_\alpha$ exists, we have 
$$
r\cdot (\L v_\alpha+\alpha v_\alpha)(x)m(x)= ((\L +\alpha)v_\alpha\;\mid\;v_\alpha)=\mathcal  Q(v_\alpha,v_\alpha)+\alpha \|v_\alpha\|^2,
$$
which proves \eqref{eq::aLplusalphaQ}.
Since $\mathcal  Q(v_\alpha,v_\alpha)=0$ if and only if $v_\alpha\equiv 0$, the latest is the only solution of \eqref{prob::dirpralpha}  for $r=0$. Hence, the corresponding matrix is invertible and \eqref{prob::dirpralpha} has a unique solution $v_\alpha$ for any $r\in \Bbb R$. 

To prove the estimate, let us note that by Green's formula
$$
 (\L v_\alpha+\alpha v_\alpha)(x)m(x)=((\L +\alpha)v_\alpha\;\mid\;1_x)=\mathcal  Q(v_\alpha,1_x)+\alpha r\cdot m(x),
$$
Combining the latest with \eqref{eq::aLplusalphaQ} we obtain:
\begin{equation}\label{eq::QvrQvr1}
|\mathcal Q(v_\alpha)|+\alpha \|v_\alpha\|^2\le |r|\cdot|\mathcal  Q(v_\alpha,1_x)|+\alpha|r|^2\cdot m(x).
\end{equation}

Now, using the inequality
$$2|z_1 z_2 | \le\varepsilon |z_1|^2 +\dfrac{1}{\varepsilon} |z_2 |^2 \mbox{ for any  }z_1 , z_2 \in  \Bbb C, \varepsilon>0,$$
we get
\begin{multline}\label{eq::Qvr1x}
|\mathcal  Q(v_\alpha,1_x)|\le \dfrac12\sum_{y,z\in X}|v_\alpha(y)-v_\alpha(z)||1_x(y)-1_x(z)||b(y,z)|\\\le \dfrac{\varepsilon}{4}\sum_{y,z\in X}|v_\alpha(y)-v_\alpha(z)|^2|b(y,z)|+\dfrac{1}{4\varepsilon}\sum_{y,z\in X}|1_x(y)-1_x(z)|^2|b(y,z)|.
\end{multline}
and, similarly, 
\begin{equation}\label{eq::alphaeps}
\alpha |r|\cdot m(x)\le   \dfrac{\varepsilon}{2}\alpha|r|^2 \cdot m(x)+ \dfrac{\alpha}{2\varepsilon}\cdot m(x)\le   \dfrac{\varepsilon}{2}\alpha \|v_\alpha\|^2+ \dfrac{\alpha}{2\varepsilon}\cdot m(x)
\end{equation}
From the other hand, note that for any $y,z\in X$ we have due to the sectoriality~\eqref{bsect}
$$
\Re b(y,z)\ge (1+c^2)^{-\frac12}|b(y,z)|,
$$
 and, hence, we obtain
\begin{multline*}
|\mathcal  Q(v_\alpha)| \ge \Re \mathcal  Q(v_\alpha)=\dfrac12\sum_{y,z\in X}|v_\alpha(y)-v_\alpha(z)|^2 \Re b(y,z)\\\ge \dfrac{1}{2(1+c^2)^{\frac12}}\sum_{y,z\in X}|v_\alpha(y)-v_\alpha(z)|^2 |b(y,z)|. 
\end{multline*}
Combining this with \eqref{eq::QvrQvr1}, \eqref{eq::Qvr1x}, \eqref{eq::alphaeps} and using the fact that $1/(1+c^2)^\frac12\le 1$ we get
\begin{align}\label{eq::2(1+c2)}
&\dfrac{1}{2(1+c^2)^{\frac12}}\sum_{y,z\in X}|v_\alpha(y)-v_\alpha(z)|^2 |b(y,z)|+\dfrac{1}{(1+c^2)^{\frac12}}\alpha \|v_\alpha\|^2\nonumber\\
&\le|\mathcal  Q(v_\alpha)| +\alpha \|v_\alpha\|^2\le
 |r|\cdot|\mathcal  Q(v_\alpha,1_x)|+\alpha|r|^2\cdot m(x) 
\\& \le\dfrac{\varepsilon |r|}{4}\sum_{y,z\in X}|v_\alpha(y)-v_\alpha(z)|^2|b(y,z)|+   \dfrac{\varepsilon |r|}{2}\alpha \|v_\alpha\|^2+\dfrac{|r|}{2\varepsilon}\widetilde C \nonumber,
\end{align}
where
$$\widetilde C:=\dfrac12\sum_{y,z\in X}|1_x(y)-1_x(z)|^2|b(y,z)|+\alpha\cdot m(x).
$$
 Now plugging in \eqref{eq::2(1+c2)} $\varepsilon=(1+c^2)^{-\frac12}|r|^{-1}$ 
we obtain from the first and the third line after simplification the following:
\begin{equation*}
\dfrac{1}{4}\sum_{y,z\in X}|v_\alpha(y)-v_\alpha(z)|^2 |b(y,z)|+\dfrac{1}{2}\alpha \|v_\alpha\|^2\\\le\dfrac{(1+c^2)|r|^2}{2}\widetilde C,
\end{equation*}
from where the estimate \eqref{est::Qvr} follows, since 
$$
\widetilde C=\frac12\sum_{y,z\in X}|1_x(y)-1_x(z)|^2|b(y,z)|+\alpha\cdot m(x)=\sum_{\substack {y\in X:}\\{y\sim x}}|b(x,y)|+\alpha\cdot m(x).
$$
\end{proof}

\begin{theorem}\label{LKinvert}
Let $(X,b,m)$ be a complex-weighted graph, $K\subset X$ is finite connected, $x\in K$. Let $\alpha\ge 0$. Then the problem
\begin{equation}\label{prob::LKplusalpha}
(\widehat L_K+\alpha)\widehat v_\alpha=1_x
\end{equation}
has a unique solution $v\in \ell^2(K,m_K)$.  Moreover, the operator $\widehat L_K+\alpha$ is invertible on $\ell^2(K,m_K)$.
\end{theorem}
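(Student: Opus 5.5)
The plan is to exploit that $\ell^2(K,m_K)$ is finite dimensional, so invertibility of $\widehat L_K+\alpha$ is equivalent to injectivity. Unique solvability of \eqref{prob::LKplusalpha} then follows at once.

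\emph{Injectivity.} Let $f\in\ell^2(K,m_K)$ with $(\widehat L_K+\alpha)f=0$. Since $\widehat L_K$ is the operator associated with $\widehat Q_K$ and $D(\widehat L_K)=\ell^2(K,m_K)$, we have
$$
0=((\widehat L_K+\alpha)f\mid f)_K=\widehat Q_K(f,f)+\alpha\|f\|_K^2=\mathcal Q(i_Kf)+\alpha\|f\|_K^2 .
$$
Taking real parts gives $\Re\mathcal Q(i_Kf)+\alpha\|f\|_K^2=0$. Both summands are nonnegative, so $\Re\mathcal Q(i_Kf)=0$. This means
$$
\frac12\sum_{y,z\in X}|i_Kf(y)-i_Kf(z)|^2\Re b(y,z)=0,
$$
so $i_Kf(y)=i_Kf(z)$ whenever $y\sim z$, because then $\Re b(y,z)>0$. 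The graph is connected (Assumption \ref{a:graph}), hence $i_Kf$ is constant on $X$. Since $X$ is infinite and $K$ is finite, $i_Kf$ vanishes somewhere outside $K$, so $i_Kf\equiv0$ and $f=0$. For $\alpha>0$ this step is immediate, but the argument above also covers $\alpha=0$, which is the only case that needs care.

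\emph{Conclusion.} An injective linear map on the finite-dimensional space $\ell^2(K,m_K)$ is bijective. Hence $\widehat L_K+\alpha$ is invertible, and $\widehat v_\alpha=(\widehat L_K+\alpha)^{-1}1_x$ is the unique solution of \eqref{prob::LKplusalpha}.

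An alternative route uses Lemma \ref{lem::dirprr}. Take the solution $v_\alpha$ with $r=1$; its restriction to $K$ satisfies $(\widehat L_K+\alpha)v_\alpha|_K=\lambda 1_x$ with $\lambda=(\L v_\alpha+\alpha v_\alpha)(x)$. By \eqref{eq::aLplusalphaQ}, $\lambda m(x)=\mathcal Q(v_\alpha)+\alpha\|v_\alpha\|^2$, whose real part is positive because $v_\alpha\not\equiv0$. So $\lambda\neq0$, and $\widehat v_\alpha=v_\alpha|_K/\lambda$ solves \eqref{prob::LKplusalpha}. Since this only handles the right-hand side $1_x$, I would still use the injectivity argument to obtain full invertibility.
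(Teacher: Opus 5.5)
Your proof is correct, but it runs in the opposite direction from the paper's. You show $\ker(\widehat L_K+\alpha)=\{0\}$ directly from the form identity $\Re\mathcal Q(i_Kf)+\alpha\|f\|_K^2=0$, using $\Re b>0$ on edges, connectivity of $X$, and $X\setminus K\neq\emptyset$. Invertibility and unique solvability of the problem for $1_x$ then follow from finite dimensionality.

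The paper instead goes through Lemma~\ref{lem::dirprr}. It builds the solution as $v_\alpha/(\L v_\alpha+\alpha v_\alpha)(x)$, which is exactly your alternative route. For uniqueness, it shows that any solution $\widehat v_\alpha$ has $\widehat v_\alpha(x)\neq 0$, normalizes, and appeals to the uniqueness in Lemma~\ref{lem::dirprr}. Only at the end does it deduce invertibility from linearity.

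Both arguments rest on the same core fact: a finitely supported $v$ with $\mathcal Q(v)=0$ (or $\Re\mathcal Q(v)+\alpha\|v\|^2=0$) must vanish. The paper uses this twice, once for the Dirichlet-problem matrix inside Lemma~\ref{lem::dirprr} and once in the uniqueness step, but only asserts it. You prove it, and you make explicit that $X\setminus K\neq\emptyset$ is what rules out constants when $\alpha=0$. Your version is therefore shorter and more self-contained.

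The paper's route is constructive. It yields the representation $(\widehat L_K+\alpha)^{-1}1_x=v_\alpha|_K/(\L v_\alpha+\alpha v_\alpha)(x)$, which is what the Green's function section needs in \eqref{eq::LhateqL}, together with the $K$-independent estimate \eqref{est::Qvr}. Your alternative paragraph, combined with your injectivity result, recovers this same formula, so your approach loses nothing.
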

\begin{proof}
Let $v_\alpha$ be as in Lemma \ref{lem::dirprr} with $r=1$.
It follows immediately from  \eqref{eq::aLplusalphaQ} that $(\L  v_\alpha+ \alpha  v_\alpha)(x)\ne 0$,  since $v_\alpha\not\equiv 0$.
Hence, the function $\widehat v_\alpha\in \ell^2(K,m_K)$ defined by
$$
\widehat v_\alpha(y)=\dfrac{v_\alpha(y)}{(\L   v_\alpha+\alpha  v_\alpha)(x)},
$$
for all $y\in K$ is the solution of \eqref{prob::LKplusalpha}. To show the uniqueness firstly note, that $\widehat v_\alpha(x)\ne 0$ for any solution $\widehat v_\alpha$ of \eqref{prob::LKplusalpha}. Indeed, if $\widehat v_\alpha(x)=0$, then, by the definition of the Dirichlet Laplacian through the form $\widehat  Q_K$, we have
$$
0=((\widehat  L_K+\alpha)\widehat v_\alpha\;\mid\;\widehat v_\alpha)=\widehat  Q_K(\widehat v_\alpha)+\alpha \|\widehat v_\alpha\|^2,
$$
i.e. $\widehat v_\alpha\equiv 0$ which is a contradiction. Then uniqueness follows from the fact, that for any $\widehat v_\alpha$, solution of  \eqref{prob::LKplusalpha}, the function 
$$
 \widetilde v_\alpha(y)=\dfrac{\widehat v_\alpha(y)}{\widehat v_\alpha(x)},
$$
is the solution of \eqref{prob::dirpralpha} for $r=1$, which is unique by  Lemma \ref{lem::dirprr}.

Finally note that since the operator $(L_K+\alpha)$ is linear, it follows from  the existence of the unique solution of \eqref{prob::LKplusalpha} that it is  invertible.
\end{proof}

With slight abuse in notation, we can consider  $\ell^2(K,m_K)$ being a subspace of $\ell^2(X,m)$, by identifying functions in $C(K)$ with functions in $C_c(K)$.
We denote by $L_K$ the operator defined on $\ell^2(X,m)$ by
\begin{equation}\label{eq::LK}
L_Kf:=i_K(\widehat L_K)(f\mid_K)
\end{equation}
for any $f\in \ell^2(X,m)$. We call this operator the {\em (Dirichlet) Laplacian with respect to $K$}.
The same concerns the notations for resolvent, i.e we denote
\begin{equation}\label{eq::LKres}
(L_K+\alpha)^{-1}f:=i_K(\widehat L_K+\alpha)^{-1}(f\mid_K),
\end{equation}
for any $\alpha\in \Bbb H_r$, for any   $f\in \ell^2(X,m)$.

To introduce the main result of this section we remind that for a sectorial form $Q$ in $H$ with the domain $D(Q)$ a subset $D\subset Q$ is called a {\em core}, if $D$ is dense in $D(Q)$ (with respect to the norm $\|\cdot\|_Q=(\Re Q(\cdot)+\|\cdot\|^2)^\frac12$), see, e.g. \cite[Remark 3.4]{Arendt} or \cite[p. 317, Theorem 1.21]{Kato}.

Moreover, for the convenience of the reader we cite the following theorem by Vogt and Voigt.

\begin{theorem}[Theorem 1.1 in \cite{VogtVoigt}]\label{thm::VV}
Let $H$ be a complex Hilbert space, and let $Q$ be a sectorial form in $H$. For $n\in \Bbb N$ let $Q_n$ be a form with $D(Q_n)\subset D(Q)$. Assume that there exists $c\in\Bbb R$ such that 
\begin{equation*}
|\Im (Q_n(u)-Q(u))|\le c\cdot \Re (Q_n(u)-Q(u)),
\end{equation*}
for all $u\in D(Q_n)$ for all $n\in \Bbb N$.
Let $D$ be a core for $Q$, and suppose that for all $u\in D$ there exists a sequence $(u_n)_{n\in\Bbb N}\subset D(Q), u_n\in D(Q_n)$ for all $n\in \Bbb N$, such that $u_n\to u$ in $D(Q)$ and $Q_n(u_n)-Q(u)\to 0$ as $n\to \infty$.

Let $A$ be the linear relation associated with $Q$, and let $A_n$ be the linear relation associated with $Q_n$, for $n\in \Bbb N$. Then $A_n$ converges to $A$ in the strong resolvent sense, i.e. $(A_n+\alpha)^{-1}\to (A+\alpha)^{-1}$ (as $n\to\infty$) strongly for all $\alpha>0$.

\end{theorem}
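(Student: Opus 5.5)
We prove this by a compactness argument in the form domain. Fix $\alpha>0$ and $f\in H$, and set $w_n:=(A_n+\alpha)^{-1}f$ and $w:=(A+\alpha)^{-1}f$. We show that $(w_n)$ is bounded in the form norm, identify every weak limit as $w$, and then upgrade weak convergence to norm convergence. Throughout we write $q_n:=Q_n-Q$ on $D(Q_n)$. The hypothesis $|\Im q_n(u)|\le c\,\Re q_n(u)$ forces $\Re q_n(u)\ge 0$, so $q_n$ is a sectorial (accretive) form. In particular $\Re Q(u)\le \Re Q_n(u)$ for all $u\in D(Q_n)$, and the sectorial Cauchy--Schwarz inequality, the analogue of \eqref{eq::CSforsect}, holds for $q_n$ with constant $1+c$.

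\emph{A priori bounds.} By definition of the associated operator (relation), $Q_n(w_n,u)+\alpha(w_n\mid u)=(f\mid u)$ for all $u\in D(Q_n)$. Taking $u=w_n$ and real parts gives
\[
\Re Q_n(w_n)+\alpha\|w_n\|^2\le \|f\|\,\|w_n\|.
\]
Hence $\|w_n\|\le \|f\|/\alpha$ and $\Re Q_n(w_n)\le \|f\|^2/\alpha$. Consequently $\Re Q(w_n)\le \Re Q_n(w_n)$ is bounded, and $(w_n)$ is bounded in $(D(Q),\|\cdot\|_Q)$. We take $Q$ closed, as in our application, so that this is a Hilbert space. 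Therefore every subsequence has a further subsequence converging weakly in $D(Q)$, and hence weakly in $H$, to some $\widetilde w\in D(Q)$.

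\emph{Identification of the limit; this is the main step.} Fix $u$ in the core $D$ and choose $u_n\in D(Q_n)$ with $u_n\to u$ in $D(Q)$ and $Q_n(u_n)\to Q(u)$. Split
\[
Q_n(w_n,u_n)=Q(w_n,u_n)+q_n(w_n,u_n).
\]
The first term tends to $Q(\widetilde w,u)$ along the subsequence, since it pairs a weakly convergent sequence with a strongly convergent one in $D(Q)$ and $Q$ is bounded there. For the second term, $q_n(u_n)=Q_n(u_n)-Q(u_n)\to Q(u)-Q(u)=0$, so
\[
|q_n(w_n,u_n)|\le (1+c)\,(\Re q_n(w_n))^{1/2}(\Re q_n(u_n))^{1/2}\to 0,
\]
because $\Re q_n(w_n)\le \Re Q_n(w_n)$ is bounded. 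Passing to the limit in $Q_n(w_n,u_n)+\alpha(w_n\mid u_n)=(f\mid u_n)$ yields $Q(\widetilde w,u)+\alpha(\widetilde w\mid u)=(f\mid u)$ for all $u\in D$. Since $D$ is dense in $D(Q)$ and $Q$ is bounded in the form norm, this identity extends to all $u\in D(Q)$. It follows that $\widetilde w=(A+\alpha)^{-1}f=w$. As every subsequence has a further subsequence with this same limit, the whole sequence satisfies $w_n\rightharpoonup w$, both in $D(Q)$ and in $H$.

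\emph{Strong convergence.} Taking real parts at $u=w_n$ gives $\Re Q_n(w_n)+\alpha\|w_n\|^2=\Re(f\mid w_n)\to\Re(f\mid w)=\Re Q(w)+\alpha\|w\|^2$. On the other hand, $\Re Q_n(w_n)\ge \Re Q(w_n)$, and by weak lower semicontinuity $\liminf\Re Q(w_n)\ge \Re Q(w)$ and $\liminf\|w_n\|^2\ge\|w\|^2$. Combining these, $\limsup\alpha\|w_n\|^2\le \alpha\|w\|^2$, so $\|w_n\|\to\|w\|$. Together with $w_n\rightharpoonup w$ in $H$, this gives $w_n\to w$ in norm, which is the claimed strong resolvent convergence.

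The delicate point is the identification step. It uses the core hypothesis $Q_n(u_n)-Q(u)\to 0$ only through $q_n(u_n)\to 0$, combined with the one-sided comparison $\Re Q\le\Re Q_n$ that comes from the sectoriality of $Q_n-Q$. If $Q$ is not assumed closed, I would first pass to its closure and check that the same argument goes through for linear relations.
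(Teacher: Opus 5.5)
The paper gives no proof of this statement. It is quoted from \cite{VogtVoigt} and used only in Theorem~\ref{thm::LDeqlimLn}. There $Q=Q^{(D)}$ is closed by Theorem~\ref{thm::dol2cQ}, each $Q_n=\widehat Q_{K_n}$ has the finite-dimensional, hence closed, domain $C_c(K_n)$, and $Q_n-Q=0$.

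In that setting your argument is correct and complete. The a priori bound in the form norm, the identification of weak limits through the core via $|q_n(w_n,u_n)|\le(1+c)(\Re q_n(w_n))^{1/2}(\Re q_n(u_n))^{1/2}\to0$, and the upgrade to norm convergence (from $\Re Q\le\Re Q_n$ and weak lower semicontinuity of $\Re Q$ on the Hilbert space $D(Q)$) all check out.

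Two small remarks. First, your deduction $\Re q_n\ge0$ needs $c>0$: for $c=0$ the displayed inequality only says that $q_n$ is real-valued. So you should read the hypothesis as ``$Q_n-Q$ is sectorial'' (values in a closed sector of the right half-plane), which is what is meant. Second, the paper's application needs even less. Since $q_n=0$ and the $D(Q_n)$ increase to the core $C_c(X)$, $w_n$ is the Galerkin approximation of $w$ for the bounded coercive form $Q+\alpha$ on $D(Q)$. Céa's lemma then gives $\|w_n-w\|_Q\to0$ directly.

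What your proposal does not cover is the theorem as stated, which assumes neither $Q$ nor the $Q_n$ closed. The non-closable case is genuinely needed, e.g.\ to recover Simon's theorem for decreasing sequences of forms, whose limit need not be closable. Besides restricting to closed $Q$, you silently use that each $Q_n$ is closed. The identity $Q_n(w_n,u)+\alpha(w_n\mid u)=(f\mid u)$ for $u\in D(Q_n)$ presupposes $w_n\in D(Q_n)$, and this fails in general when $Q_n$ is not closed.

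Your fallback ``pass to the closure'' works when $Q$ and all $Q_n$ are closable. The hypotheses transfer to the closures because $\|\cdot\|_Q\le\|\cdot\|_{Q_n}$ on $D(Q_n)$ and the sector is closed. But a non-closable form has no closure, and its associated relation must be defined through the completion of its domain.

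The repair is to run your three steps in the completions.
\begin{enumerate}
\item Let $V,V_n$ be the completions of $D(Q),D(Q_n)$ in the form norms, and let $j\colon V\to H$, $j_n\colon V_n\to H$ be the (possibly non-injective) extensions of the inclusions. Let $\tilde Q,\tilde Q_n$ be the extended forms. Then $(x,y)\in A$ iff $x=ju$ for some $u\in V$ with $\tilde Q(u,v)=(y\mid jv)$ for all $v\in V$, and similarly for $A_n$.
\item Since $\|u\|_Q\le\|u\|_{Q_n}$, the inclusion extends to a contraction $\iota_n\colon V_n\to V$ with $j\iota_n=j_n$. Moreover $\tilde Q_n=\tilde Q(\iota_n\cdot,\iota_n\cdot)+\tilde q_n$, where $\tilde q_n$ is sectorial on $V_n$ and obeys the same Cauchy--Schwarz bound.
\item Write $w_n=j_n\tilde w_n$. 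Then $(\iota_n\tilde w_n)$ is bounded in $V$, and the core $D$ is dense in $V$. The limit equation $\tilde Q(\tilde w,v)+\alpha(j\tilde w\mid jv)=(f\mid jv)$, $v\in V$, has a unique solution by Lax--Milgram. Finally, $\Re\tilde Q$ is weakly lower semicontinuous on $V$ as a continuous convex functional.
\end{enumerate}
With these ingredients your argument goes through unchanged.
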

By {\em linear relation, associated  with a form $Q$}, it is meant in \cite{VogtVoigt} the 
corresponding $m$-sectorial operator, if the form is densely defined. Otherwise, the form defines an $m$-sectorial operator $\widetilde A$ on the subspace $\widetilde H$ of the Hilbert space $H$. Obviously, this operator  $\widetilde A$ can be identified with a subset of $H\times H$, which we also denote by $\widetilde A$, i.e. we can write with a slight abuse of notations:
$$
\widetilde A=\{(x,y)\in H\times H\;\mid\;y = \widetilde Ax\}.
$$
Then  the associated linear relation on $H$ is defined by $A:=\widetilde A \bigoplus (\{0\} \times \widetilde H^\perp )$, where the direct sum is an orthogonal direct sum in $H\times H$. Moreover, the linear relation $(A+\alpha)^{-1}$ is defined by
$$
(A+\alpha)^{-1}:=\{(y+\alpha x,x)\in H\times H\;\mid\;(x,y)\in A\},
$$
for any $\alpha>0$.
Then the  linear relations $(A_n+\alpha)^{-1}$ and  $(A+\alpha)^{-1}$ in the Theorem \ref{thm::VV} are operators, see  \cite[Remark 3.2(d)]{VogtVoigt}.

\bigskip
A sequence $(K_n)_{n\in \Bbb N}$ is a {\em finite exhaustion} of a complex-weighted graph $(X,b,m)$ if $K_n\subset X$ are finite connected, $K_n\subset K_{n+1}$ for all $n\in \Bbb N$ and $X=\cup_{n\in \Bbb N}K_n$. Then the following theorem shows the relation between the Dirichlet Laplacians $L_{K_n}$ and the Dirichlet Laplacian $L^{(D)}$.

\begin{theorem}\label{thm::LDeqlimLn}
Let $(X,b,m)$ be a complex-valued graph. Let $(K_n)_{n\in \Bbb N}$  be a finite exhaustion of $(X,b,m)$ and $L_n$ be Dirichlet Laplacians with respect to $K_n, n\in \Bbb N$. Then
\begin{equation*}
(L^{(D)}+\alpha)^{-1}=\lim_{n\to\infty}({L_n}+\alpha)^{-1},
\end{equation*}
in the strong resolvent sense for all $\alpha>0$.
\end{theorem}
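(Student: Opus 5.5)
We will apply Theorem~\ref{thm::VV} (Vogt--Voigt) with $H=\ell^2(X,m)$, $Q=Q^{(D)}$ with domain $D(Q^{(D)})=\mathcal D_0\cap\ell^2(X,m)$, and $Q_n$ the form on $H$ with domain $D(Q_n):=C_c(K_n)$ given by $Q_n(u):=\mathcal Q(u)$. This $Q_n$ is just $\widehat Q_{K_n}$ transported by $i_{K_n}$. The forms $Q_n$ are not densely defined in $H$, so their associated linear relations are used. The first step is to identify these relations. The closure of $D(Q_n)$ in $H$ is $\widetilde H_n=C_c(K_n)\cong\ell^2(K_n,m_{K_n})$, and the $m$-sectorial operator on $\widetilde H_n$ associated with $Q_n$ is $\widehat L_{K_n}$. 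Hence the relation is $A_n=\widehat L_{K_n}\oplus(\{0\}\times\widetilde H_n^\perp)$. Unwinding the definition of $(A_n+\alpha)^{-1}$ shows that, for $f=f_1+f_2$ with $f_1\in\widetilde H_n$ and $f_2\in\widetilde H_n^\perp$, we get $(A_n+\alpha)^{-1}f=i_{K_n}(\widehat L_{K_n}+\alpha)^{-1}f_1$. Theorem~\ref{LKinvert} guarantees invertibility of $\widehat L_{K_n}+\alpha$. The result is exactly $(L_n+\alpha)^{-1}f$ as defined in \eqref{eq::LKres}. Likewise the relation $A$ associated with the densely defined form $Q^{(D)}$ is $L^{(D)}$.

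Next we check the hypotheses of Theorem~\ref{thm::VV}. The inclusion $D(Q_n)=C_c(K_n)\subset C_c(X)\subset D(Q^{(D)})$ is clear. For $u\in D(Q_n)$ we have $Q_n(u)-Q(u)=0$, so the sectoriality condition $|\Im(Q_n(u)-Q(u))|\le c\,\Re(Q_n(u)-Q(u))$ holds trivially with any $c$. For the core we take $D=C_c(X)$, which is a core by Theorem~\ref{thm::dol2cQ}, since $D(Q^{(D)})=\overline{C_c(X)}^{\|\cdot\|_{\mathcal Q}}$.

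It remains to build the approximating sequence. Given $u\in C_c(X)$, its support is finite and $\bigcup_n K_n=X$ with $K_n$ increasing, so there is $N$ with $\supp u\subset K_n$ for all $n\ge N$. Set $u_n:=u$ for $n\ge N$ and $u_n:=0$ for $n<N$. Then $u_n\in D(Q_n)$, the sequence is eventually constant equal to $u$ (so $u_n\to u$ in $D(Q)$), and $Q_n(u_n)-Q(u)=0$ for $n\ge N$. Theorem~\ref{thm::VV} then yields $(L_n+\alpha)^{-1}\to(L^{(D)}+\alpha)^{-1}$ strongly for all $\alpha>0$.

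The only genuinely delicate point is the first step: matching the abstract linear relation $(A_n+\alpha)^{-1}$ of Vogt--Voigt with the concrete operator \eqref{eq::LKres}. One must confirm that the orthogonal complement part is sent to $0$, and that $\alpha>0$ is real, as the cited theorem requires. Everything else is immediate from the definitions.
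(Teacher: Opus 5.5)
Your proposal is correct and follows essentially the same route as the paper. Both apply Theorem~\ref{thm::VV} with $Q_n=\widehat Q_{K_n}$ (transported by $i_{K_n}$), use that $C_c(X)$ is a core by Theorem~\ref{thm::dol2cQ}, and take $u_n=u$ eventually. Your explicit computation of $(A_n+\alpha)^{-1}$, recovering \eqref{eq::LKres}, spells out the identification of the linear relations that the paper simply calls clear.
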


\begin{proof}
Since $C_c(X)$ is dense in $\ell^2(X,m)$, the Dirichlet Laplacian $L^{(D)}$  is an operator, associated to $Q^{(D)}$, and, hence, a linear relation. Further, it is clear,
that the operator $(L_n+\alpha)^{-1}$, $\alpha>0$, is the needed  linear relation by its definition \eqref{eq::LKres}, since $\widehat L_n$ is the operator, associated with $\widehat Q_n$. Note that $\widehat Q_n(u)=Q^{(D)}(u)$ for all $u\in D(\widehat Q_{K_n})$. Now the statement is an immediate corollary of Theorem~\ref{thm::VV}, since $C_c(X)$ is a core of $Q^{(D)}$ by Theorem \ref{thm::dol2cQ}, and, hence the sequence $u_n$ can be taken such that $u_n=u$ eventually. 
\end{proof}
Note, that the operator $L_K$, defined by \eqref{eq::LK} is not the linear relation, corresponding to $\widehat L_K$, since $L_K$ is not surjective on $\ell^2(X,m)$.

\section{Electrical networks}
\label{sect::elnetwork}
\subsection{Definitions and main properties}
It is known, see, e.g. \cite{Brune, Feynman, Muranova1} that an electrical network, connected to an AC source, can be represented as a graph, whose  edges are equipped with complex weights, depending on a parameter. The following formal definition was introduced in \cite{Muranova3} and \cite{Muranova1}. 
\begin{definition}\label{def::elnetwork}
An {\em electrical network} is a graph $(X,E,m)$, whose each edge is equipped with an {\em admittance},
i.e. a function
\begin{equation*}
\a^{(\cfreq)}(x,y)=\dfrac{\cfreq}{L_{xy}\cfreq^2 +R_{xy}\cfreq+D_{xy} },
\end{equation*}
where $L_{xy},R_{xy},D_{xy}\ge 0$ and at least one of them is not zero. 
\end{definition}

We assume $\a^{(\cfreq)}(x,y)\equiv 0$ if there is no edge between $x$ and $y$. Then $\a^{(\cfreq)}:X\times X\to \Bbb R$ for any $\cfreq \in \Bbb H_r$  and  any electrical network is uniquely determined by the triple $(X,\a^{(\cfreq)},m)$ to which we  will refer as an electrical network.

\begin{theorem}\label{thm:ImsRes}
The electrical network  $(X,\a^{(\cfreq)},m)$ defines a complex-weighted graph with the secoriality constant
$$
c=\dfrac{|\Im \cfreq|}{\Re \cfreq},
$$
for any $\cfreq\in\Bbb H_r$.
\end{theorem}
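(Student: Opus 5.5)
We have to show that, for each fixed $\cfreq\in\Bbb H_r$, every edge weight $\a^{(\cfreq)}(x,y)$ lies in $\Bbb H_r$ and satisfies \eqref{bsect} with $c=|\Im \cfreq|/\Re\cfreq$. Since the definition of a complex-weighted graph in Definition \ref{def::complexweightedgraph} asks only for these two properties on each edge, the argument is entirely edgewise. The graph structure $(X,E,m)$ is untouched.

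The plan is to pass to the reciprocal, the impedance. Fix an edge $\{x,y\}$ and write $L=L_{xy}$, $R=R_{xy}$, $D=D_{xy}$. For $\cfreq\ne 0$ we have
$$
\frac{1}{\a^{(\cfreq)}(x,y)}=L\cfreq+R+\frac{D}{\cfreq}=L\cfreq+R+\frac{D\,\overline{\cfreq}}{|\cfreq|^2}.
$$
This gives
$$
\Re\frac{1}{\a^{(\cfreq)}}=\Re\cfreq\Big(L+\frac{D}{|\cfreq|^2}\Big)+R,\qquad \Im\frac{1}{\a^{(\cfreq)}}=\Im\cfreq\Big(L-\frac{D}{|\cfreq|^2}\Big).
$$
Because $L,R,D\ge0$ and at least one of them is positive, the real part is strictly positive when $\Re\cfreq>0$. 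In particular the denominator $L\cfreq^2+R\cfreq+D$ does not vanish, so the admittance is well defined. Moreover,
$$
\Big|\Im\frac{1}{\a^{(\cfreq)}}\Big|\le|\Im\cfreq|\Big(L+\frac{D}{|\cfreq|^2}\Big)\le\frac{|\Im\cfreq|}{\Re\cfreq}\,\Re\frac{1}{\a^{(\cfreq)}},
$$
where the last step uses $R\ge0$.

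Next we transfer these facts back to $\a^{(\cfreq)}$ itself. For any $w\ne0$, the reciprocal $1/w=\overline w/|w|^2$ has $\Re(1/w)=\Re w/|w|^2$ and $\Im(1/w)=-\Im w/|w|^2$. So both the positivity of the real part and the inequality $|\Im w|\le c\,\Re w$ pass from $1/\a^{(\cfreq)}$ to $\a^{(\cfreq)}$. Hence $\a^{(\cfreq)}(x,y)\in\Bbb H_r$ and $|\Im \a^{(\cfreq)}(x,y)|\le c\,\Re \a^{(\cfreq)}(x,y)$ with $c=|\Im\cfreq|/\Re\cfreq$. This constant is independent of the edge, which is exactly what \eqref{bsect} requires.

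There is no serious obstacle here. The only points needing care are the bound $|L-D/|\cfreq|^2|\le L+D/|\cfreq|^2$, which holds because $L,D\ge 0$, and the nondegeneracy condition that at least one of $L,R,D$ is positive. That condition is what guarantees the real part of the impedance is strictly positive rather than merely nonnegative.
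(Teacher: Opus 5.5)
Your proof is correct, but it takes a different route from the paper. The paper does not compute anything about the impedance. It imports the angle inequality $\Re \a^{(\cfreq)}(x,y)/|\a^{(\cfreq)}(x,y)|\ge \Re\cfreq/|\cfreq|$ from \cite[Lemma 4]{Muranova3}, squares it, and rearranges it into $(\Re \a^{(\cfreq)}(x,y))^2(\Im\cfreq)^2\ge(\Re\cfreq)^2(\Im\a^{(\cfreq)}(x,y))^2$, which is \eqref{bsect} with $c=|\Im\cfreq|/\Re\cfreq$. You instead prove what amounts to that cited lemma from scratch. Writing the impedance as $L\cfreq+R+D/\cfreq$ shows it is a nonnegative combination of $\cfreq$, $1$ and $1/\cfreq=\overline{\cfreq}/|\cfreq|^2$, each of which lies in the closed sector $S=\{z\in\Bbb C : |\Im z|\le c\,\Re z\}$. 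Your coordinate estimate is just the fact that $S$ is a convex cone, and your final step is the fact that $S\setminus\{0\}$ is invariant under $z\mapsto 1/z=\overline{z}/|z|^2$. The paper's version is shorter, given the citation. Yours is self-contained and shows that the constant is sharp: equality holds for purely inductive or purely capacitive edges. It also explicitly checks two points the paper leaves implicit in the cited inequality. First, the denominator $L\cfreq^2+R\cfreq+D$ never vanishes on $\Bbb H_r$. Second, $\Re\a^{(\cfreq)}(x,y)>0$, as Definition~\ref{def::complexweightedgraph} requires.
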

\begin{proof}
By \cite[Lemma 4]{Muranova3} we have
$$
\dfrac{\Re \a^{(\cfreq)}(x,y)}{|\a^{(\cfreq)}(x,y)|}\ge  \dfrac{\Re \cfreq}{|\cfreq|},
$$
which can be rewritten as
$$
{(\Re^2 \a^{(\cfreq)}(x,y))}{(\Re^2\cfreq+\Im^2 \cfreq)}\ge {(\Re^2 \cfreq)}{(\Re^2\a^{(\cfreq)}(x,y)+\Im^2 \a^{(\cfreq)}(x,y))},
$$
which is equivalent to
$$
{(\Re^2 \a^{(\cfreq)}(x,y))}{(\Im^2 \cfreq)}\ge {(\Re^2 \cfreq)}{(\Im^2 \a^{(\cfreq)}(x,y))},
$$
from where immediately follows that the weight $ \a^{(\cfreq)}(x,y)$ satisfies sectoriality  \eqref{bsect} with
$$
c = \dfrac{|\Im \cfreq|}{\Re \cfreq}.
$$
\end{proof}

Therefore, we the  formal Laplacian $\L^{(\cfreq)}$ and sesquilinear form $\mathcal Q^{(\cfreq)}$ can be introduced for electrical networks. We will omit superscript $(\cfreq)$ when it is clear from the context.

\subsection{Complex-weighted graph ``extension'' to an electrical network}
The real-weighted graphs are known to be related to electrical networks with resistors, see, e.g. \cite{DoyleSnell}. In this section we show, that any complex-weighted graph (satisfying sectoriality property \eqref{bsect}) can be ``extended'' to an electrical network. Being an interesting result on its own, this also allows us to prove convergence results in Section \ref{section:recurrence}. The following theorem can be considered as the main result of this paper. 
\begin{theorem}\label{thm::main}
Let $(X,b,m)$ be a complex-weighted graph, $c$ be its sectoriality constant \eqref{bsect}.
Then for any $\cfreq_0\in \Bbb H_r$ with $\dfrac{|\Im \cfreq_0|}{\Re \cfreq_0}\ge c$, there exist  an electrical network  $(X,a^{(\cfreq)},m)$ such that 
$$
a^{(\cfreq_0)}(x,y)=b(x,y)
$$
for any $x,y\in X$.
\end{theorem}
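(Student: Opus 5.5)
The plan is to construct the admittance edge by edge, working with the impedance $z(x,y):=1/b(x,y)$ rather than with $b$ itself. Fix an edge $\{x,y\}$ and put $z:=1/b(x,y)$. Since $1/w=\overline w/|w|^2$, we have $\Re z=\Re b/|b|^2>0$ and $\Im z=-\Im b/|b|^2$. Hence $z\in\Bbb H_r$ and $|\Im z|\le c\,\Re z$, so $z$ satisfies the sectoriality \eqref{bsect} with the same constant $c$. The condition $a^{(\cfreq_0)}(x,y)=b(x,y)$ is equivalent to
$$
L_{xy}\cfreq_0+R_{xy}+\frac{D_{xy}}{\cfreq_0}=z .
$$
So the task is to solve this equation for $L_{xy},R_{xy},D_{xy}\ge 0$, not all zero. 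Off the edge set we put $a^{(\cfreq)}\equiv 0$, exactly as we do for $b$.

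Write $\cfreq_0=\sigma+i\tau$ with $\sigma>0$. Using $1/\cfreq_0=(\sigma-i\tau)/|\cfreq_0|^2$, the equation splits into a real part and an imaginary part:
$$
\sigma\Big(L_{xy}+\frac{D_{xy}}{|\cfreq_0|^2}\Big)+R_{xy}=\Re z,\qquad \tau\Big(L_{xy}-\frac{D_{xy}}{|\cfreq_0|^2}\Big)=\Im z .
$$

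If $\tau=0$, the hypothesis $|\Im\cfreq_0|/\Re\cfreq_0\ge c$ forces $c=0$. Then $b$ is real and positive, and we take $L_{xy}=D_{xy}=0$ and $R_{xy}=1/b(x,y)>0$; this is the classical resistor correspondence.

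If $\tau\ne0$, we make one reactive parameter vanish, according to the sign of $\Im z/\tau$. If $\Im z/\tau\ge 0$, we set $L_{xy}=\Im z/\tau$ and $D_{xy}=0$. Otherwise we set $L_{xy}=0$ and $D_{xy}=-|\cfreq_0|^2\Im z/\tau>0$. In both cases the imaginary equation holds, and the real equation requires
$$
R_{xy}=\Re z-\frac{\sigma\,|\Im z|}{|\tau|}.
$$

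The only real obstacle is to check that this $R_{xy}$ is nonnegative, and this is exactly where the hypothesis on $\cfreq_0$ is used. From $|\tau|/\sigma\ge c$ we get $\sigma|\Im z|/|\tau|\le c\,\Re z\,\sigma/|\tau|\le \Re z$. If $c=0$, then $\Im z=0$ and there is nothing to prove. Finally, not all three parameters vanish, because $z\ne 0$ means the left-hand side cannot be zero. Doing this for every edge defines an admittance in the sense of Definition \ref{def::elnetwork} on the same graph with the same measure. By construction it satisfies $a^{(\cfreq_0)}(x,y)=b(x,y)$ for all $x,y\in X$.
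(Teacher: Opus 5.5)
Your proof is correct and is essentially the paper's argument. Both split the defining equation at $s_0$ into real and imaginary parts and solve for nonnegative $L_{xy},R_{xy},D_{xy}$, with the hypothesis $|\Im s_0|/\Re s_0\ge c$ used exactly to guarantee $R_{xy}\ge 0$. The only difference is presentational: you divide by $s_0$ and work with the impedance $1/b$, which avoids Cramer's rule and picks out directly the solution with $D_{xy}=0$ or $L_{xy}=0$. That solution is precisely the paper's choice $D_0=0$, respectively $D_0$ equal to the lower bound in \eqref{eq::D0}.
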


\begin{proof}
 We need to prove that the equation 
\begin{equation*}
\dfrac{\cfreq_0}{L_{xy}\cfreq_0^2+R_{xy}\cfreq_0+D_{xy}}=b(x,y).
\end{equation*}
 has a solution in terms of $L_{xy},R_{xy}, D_{xy}$, where all the variables are positive, for any $b(x,y)\ne 0$, satisfying sectoriality \eqref{bsect}. Then the corresponding electrical network will be defined by
 $$
\a^{(\cfreq)}(x,y):= \dfrac{\cfreq}{L_{xy}\cfreq^2+R_{xy}\cfreq+D_{xy}},
 $$
 for all $x,y\in X$ with $b(x,y)\ne 0$ and $\a^{(\cfreq)}(x,y):=0$ otherwise.
 
 Note, that if $\Im b(x,y)=0$, then $L_{xy}=D_{xy}=0$ and $R_{xy}=1/b(x,y)$ is the needed solution. So assume $\Im b(x,y)\ne 0$. Let us denote $b:=b(x,y)$, omit subscripts ${xy}$ for the proof and solve:
$$
\dfrac{\cfreq_0}{L\cfreq_0^2+R\cfreq_0+D}=b.
$$
Assume, the denominator is not zero (which will hold if the solution will be non-negative and positive for at least one of the variables $L,R,D$). Without loss of generality we can assume that $\Re \cfreq_0=1$ (otherwise scale $L,R,D$). Hence, let us write $\cfreq_0=1+\omega i$, where $|\omega|\ge c$. Then we should solve
\begin{equation*}
b(L(1+i\omega)^2+R(1+i\omega)+D)={1+i\omega},
\end{equation*}
which we rewrite as
\begin{equation}\label{eq::beq1}
L(1+i\omega)^2+R(1+i\omega)+D=\dfrac{{(1+i\omega)}\overline b}{|b|^2}.
\end{equation}
Let $b_1=\Re b$ and $b_2=\Im b$. By assumption $0<|b_2|\le c\cdot b_1$. Then the equation \eqref{eq::beq1}, being split into real and imaginary part, is equivalent to:
$$
\begin{cases}
(1-\omega^2)L+R+D=\dfrac{b_1+\omega b_2}{|b|^2},\\
2\omega L+\omega R=\dfrac{b_1\omega - b_2}{|b|^2}.
\end{cases}
$$
Since there are $2$ equations and $3$ variables, let us assume that $D=D_0$ is fixed (we will choose it later) and find a solution for $(L_0, R_0)$, using the Cramer's rule. Let
$$
A=\begin{pmatrix}
1-\omega^2&1\\
2\omega&\omega
\end{pmatrix},
A_L=
\begin{pmatrix}
\dfrac{b_1+\omega b_2}{|b|^2}-D_0&1\\
\dfrac{b_1\omega - b_2}{|b|^2}&\omega
\end{pmatrix},
$$
and
$$
A_R=
\begin{pmatrix}
1-\omega^2&\dfrac{b_1+\omega b_2}{|b|^2}-D_0\\
2\omega&\dfrac{b_1\omega - b_2}{|b|^2}
\end{pmatrix}.
$$
Hence we get
$\det A=-\omega^3-\omega$ and $\sgn(\det A)=-\sgn \omega$. Further,
\begin{equation}\label{eq::detAL}
\det A_L=\dfrac{(1+\omega^2)b_2}{|b|^2}-\omega D_0,
\end{equation}
$$
\det A_R=\dfrac{-(1+\omega^2)(b_1\omega+b_2)}{|b|^2}+2\omega D_0.
$$
Since $L=\dfrac{\det A_L}{\det A}$ and $R=\dfrac{\det A_R}{\det A}$ should be non-negative, we should require $\sgn(\det A_L), \sgn(\det A_R)\in \{0, -\sgn \omega\}$, i.e. we should choose $D_0$ such that
\begin{equation}\label{eq::D0}
\dfrac{(1+\omega^2)b_2}{\omega |b|^2}\le D_0\le \dfrac{(1+\omega^2)(b_1\omega + b_2)}{2 \omega |b|^2}. 
\end{equation}
Firstly note, that $|b_1\omega|=|b_1||\omega|\ge b_1c\ge|b_2|$, while $\sgn b_1\omega=\sgn \omega$ (since $b_1\ge c|b_2|>0$), and, hence, $\sgn(b_1\omega + b_2)=\sgn \omega$ or $\sgn(b_1\omega+b_2)=0$, i.e. the right hand side of the inequality \eqref{eq::D0} is non-negative. If $\sgn b_2=-\sgn \omega$, then the left hand side is negative and there exists a needed $D_0\ge 0$ (note that if $D_0=0$, then $L_0\ne 0$ since $\det A_L\ne 0$ due to \eqref{eq::detAL} and the assumption $b_2\ne 0$). So assume $\sgn b_2=\sgn \omega$. In this case since $b_1>0$, we obtain 
$$
\dfrac{b_1\omega+b_2}{\omega}=\dfrac{|b_1\omega+b_2|}{|\omega|}=\dfrac{|b_1||\omega|+|b_2|}{|\omega|}\ge \dfrac{|b_1|\cdot c+|b_2|}{|\omega|}\ge \dfrac{2|b_2|}{|\omega|}= \dfrac{2b_2}{\omega},
$$
from where immediately follows that right-hand side of  \eqref{eq::D0} is greater than or equal  to the left-hand side and, hence, there exists $D_0>0$, satisfying~\eqref{eq::D0}, since $b_2\ne 0$. 

\end{proof}

\section{Characterization of recurrence}
\label{section:recurrence}
\subsection{Concept of recurrence}

A concept of recurrence has a very important meaning in probability theory, optimal transport and theory of electrical networks.   The meaning of recurrence is the existence of a non-zero flow on a graph. For more details we refer reader to \cite[Chapter 6]{KellerLenzWojcBook} for analytic point of view and to \cite{Woess09, Woess00} for the probabilistic interpretation. Moreover, there is a branch of equivalent definitions of recurrence for real-weighted graphs, see \cite[Theorem 6.1]{KellerLenzWojcBook}, which gives the most complete description of equivalent definitions.
 We also point out that in \cite{MuranovaWoess} the concept of recurrence for electrical networks (in the sense of Definition \ref{def::elnetwork}) is considered with its applications to classical reversible random walks. 
 
 In this section we introduce recurrence for complex-weighted graphs and present some equivalent characterizations for it, following the real case, see \cite[Theorem 6.1]{KellerLenzWojcBook}. In particular, we characterize recurrence in terms of capacity, Green's function, resolvents of the Dirichlet Laplacian, and properties of the Neumann Laplacian. Although the results are the same as for real graphs, we should point out that methods of proofs are very different and heavily based on the fact that every complex-weighted graph is an electrical network, i.e. Theorem \ref{thm::main}, while in the case of real-weighted graphs proofs are usually based on monotone convergence theorem.

 We firstly remind the reader how the concept of recurrence looks for a real-weighted graph, i.e. a complex-weighted graph $(X,\mathfrak b,m)$ with $\mathfrak b: V\to~\Bbb R^+_0$. 
\begin{definition}\label{def::classicalRec}
The real-weighted graph $(X, \mathfrak b, m)$  is called {\em recurrent}, if
\begin{equation}\label{eq::infQreal}
\inf \;\{\;\Re \Q_\mathfrak b(\phi)\;|\;\phi(x)=1, \phi\in C_c(X)\;\}=0,
\end{equation}
for some (all) $x\in X$, where $\Q_\mathfrak b$ is the corresponding formal energy.
 Otherwise, the graph is called {\em transient}. 
 \end{definition}
 Note that classically one considers  functions $\phi\in C_c(X)$, taking values in $\Bbb R$, in the definition of recurrence for real-weighted graphs (see, e.g. \cite[Theorem 6.1(vii), Definition 6.2]{KellerLenzWojcBook}), but  the inverse triangle inequality implies
$$
\big|\phi(x)-\phi(y)\big|\ge \big||\phi(x)|-|\phi(y)|\big|,
$$
for any $x,y\in X$ and any $\phi\in C_c(X)$, and, hence, the classical definition is equivalent to \eqref{eq::infQreal}.

We start with the following theorem, which immediately states an equivalence of several definitions of recurrence for complex-weighted graph, see Definition \ref{def::rec} below. 
\begin{theorem}\label{thm::rec}
Let $(X,b,m)$ be a complex-weighted graph and $\Q:=\Q_b$ be a formal energy on  it. Then the following are equivalent:
\begin{itemize}
\item[{\em (i)}]
$\inf \;\{\;|\Q(\phi)|\;|\;\phi(x)=1, \phi\in C_c(X)\;\}=0,$ for some (all) $x\in X$,
\item[{\em (ii)}]
$\inf \;\{\;\Re \Q(\phi)\;|\;\phi(x)=1, \phi\in C_c(X)\;\}=0,$ for some (all) $x\in X$,
\item[{\em (iii)}]
$\inf \;\{\;\Q_{\Re b}(\phi)\;|\;\phi(x)=1, \phi\in C_c(X)\}=0,$ for some (all) $x\in X$.
\end{itemize}
\end{theorem}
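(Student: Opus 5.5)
The plan is to compare the three quantities pointwise for every $\phi\in C_c(X)$ and then pass to the infimum. The pointwise comparisons come from sectoriality, Lemma~\ref{lem::formalQmapstosector}, together with the identity $\Re\Q_b(\phi)=\Q_{\Re b}(\phi)$. The only remaining issue is the ``some (all)'' clause, which reduces to the classical real-weighted statement.

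\textbf{(ii)$\Leftrightarrow$(iii).} For every $\phi\in C_c(X)$ we have
$$
\Re \Q_b(\phi)=\dfrac12\sum_{x,y\in X}|\phi(x)-\phi(y)|^2\Re b(x,y)=\Q_{\Re b}(\phi),
$$
so the two sets whose infima appear in (ii) and (iii) coincide for each fixed $x$.

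\textbf{(i)$\Leftrightarrow$(ii).} By Lemma~\ref{lem::formalQmapstosector}, $|\Im\Q(\phi)|\le c\,\Re\Q(\phi)$, and $\Re\Q(\phi)\ge0$. This gives, for every $\phi\in C_c(X)$,
$$
\Re\Q(\phi)\le|\Q(\phi)|\le\sqrt{1+c^2}\,\Re\Q(\phi).
$$
Taking infima over $\{\phi\in C_c(X)\mid\phi(x)=1\}$ with $x$ fixed, the infimum of $|\Q(\phi)|$ vanishes if and only if the infimum of $\Re\Q(\phi)$ vanishes.

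\textbf{Independence of $x$.} The only nontrivial point is that each condition holds for some $x$ if and only if it holds for all $x$. By the two steps above, all three conditions are equivalent for each fixed $x$, so it suffices to treat (iii). Since $\Re b\ge 0$, the graph $(X,\Re b,m)$ is a real-weighted graph. It is connected and locally finite with the same edges, because $\Re b(x,y)>0$ exactly when $b(x,y)\ne0$. For such graphs the independence of the vertex is the classical fact in Definition~\ref{def::classicalRec}; see \cite[Theorem 6.1]{KellerLenzWojcBook}.

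For completeness, a direct argument: let $x\sim y$ and let $\phi_n\in C_c(X)$ satisfy $\phi_n(x)=1$ and $\Q_{\Re b}(\phi_n)\to0$. Then
$$
|\phi_n(x)-\phi_n(y)|^2\,\Re b(x,y)\le 2\,\Q_{\Re b}(\phi_n)\to 0,
$$
so $\phi_n(y)\to1$. The normalized functions $\psi_n=\phi_n/\phi_n(y)$ then satisfy $\psi_n(y)=1$ and $\Q_{\Re b}(\psi_n)\to0$. Connectedness propagates the condition from $x$ to every vertex.
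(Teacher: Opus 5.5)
Your proof is correct and follows the paper's argument. You get (ii)$\Leftrightarrow$(iii) from the identity $\Re\Q_b(\phi)=\Q_{\Re b}(\phi)$ and (i)$\Leftrightarrow$(ii) from sectoriality, then reduce the ``some (all)'' clause through (iii) to the real-weighted graph $(X,\Re b,m)$; your extra neighbour-to-neighbour normalization argument is a valid, self-contained substitute for the citation to the real-weighted theory (the single-edge bound even holds without the factor $2$, since $b$ is symmetric).
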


\begin{proof}
We will prove the equivalences for a fixed $x\in X$. Then the equivalence of ``some'' and ``all''  will follow from (iii). Indeed, (iii) for ``some'' $x\in X$ defines a recurrence on the real-weighted graph $(X,\Re b,m)$, and  for real-weighted graphs the equivalence of ``some'' and ``all''  is known, see, e.g. \cite[Theorem 6.1 (vii)]{KellerLenzWojcBook}.

(i)$\Rightarrow$(ii) is clear.

(ii)$\Rightarrow$(i) follows from the sectoriality, Lemma~\ref{lem::formalQmapstosector}.

(ii)$\Leftrightarrow$(iii) since 
\begin{align*}
\Re \Q(\phi)&=\Re \left(\frac 12 \sum_{x,y\in X}|\phi(x)-\phi(y)|^2 b(x,y)\right)\\
&=\frac 12 \sum_{x,y\in X}|\phi(x)-\phi(y)|^2 \Re b(x,y)=\mathcal Q_{\Re b}(\phi),
\end{align*}
for any $\phi\in C_c(X)$.

\end{proof}

\begin{definition}\label{def::rec} A complex-weighted graph $(X,b,m)$ is called {\em recurrent} if any of the equivalent conditions of Theorem \ref{thm::rec} is satisfied. Otherwise, the graph is called {\em transient}.
\end{definition}

Note, that for the real-weighted graph Definition \ref{def::rec} coincides with the classical definition, described in the beginning of this section. Moreover, Theorem \ref{thm::rec} immediately implies

\begin{corollary}\label{cor::Rebrec}
A graph $(X,b,m)$ is recurrent if and only if the graph $(X,\Re b,m)$ is recurrent.
\end{corollary}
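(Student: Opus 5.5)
The corollary follows directly from Theorem~\ref{thm::rec} and Definition~\ref{def::rec}. The plan is to compare condition (iii) of Theorem~\ref{thm::rec} for $(X,b,m)$ with the defining condition for $(X,\Re b,m)$.

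First, check that $(X,\Re b,m)$ is a legitimate graph to which Definition~\ref{def::rec} applies. Since $b(x,y)\in\Bbb H_r$ on edges, we have $\Re b(x,y)>0$ there, and $\Re b\equiv 0$ off edges. So $\Re b:E\to\Bbb H_r$ is real-valued and positive, and it satisfies \eqref{bsect} with constant $0$. The underlying graph $(X,E,m)$ is unchanged, so local finiteness and connectedness (Assumption~\ref{a:graph}) hold. Hence $(X,\Re b,m)$ is both a complex-weighted graph and a real-weighted graph in the sense of Definition~\ref{def::classicalRec}, and the two notions of recurrence agree for it, as remarked after Definition~\ref{def::rec}.

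Next, by Definition~\ref{def::rec}, $(X,b,m)$ is recurrent exactly when condition (iii) holds:
$$\inf\{\Q_{\Re b}(\phi)\mid \phi(x)=1,\ \phi\in C_c(X)\}=0$$
for some (all) $x\in X$. Now apply Theorem~\ref{thm::rec}, condition (ii), to the graph $(X,\Re b,m)$. Its formal energy is $\Q_{\Re b}$, which is real-valued, so $\Re\Q_{\Re b}=\Q_{\Re b}$. Recurrence of $(X,\Re b,m)$ is therefore also equivalent to the displayed infimum vanishing. The two conditions are identical, which proves the equivalence.

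There is no real obstacle here. The only point needing care is that $(X,\Re b,m)$ satisfies the standing assumptions, so that Theorem~\ref{thm::rec} applies to it. We also need the "some/all" quantifier to match, and Theorem~\ref{thm::rec} already handles this via \cite[Theorem 6.1 (vii)]{KellerLenzWojcBook}.
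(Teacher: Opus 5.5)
Your proposal is correct and follows the paper's approach: the paper also derives the corollary directly from Theorem~\ref{thm::rec}, because condition (iii) for $(X,b,m)$ is exactly the recurrence condition for $(X,\Re b,m)$. Your explicit check that $(X,\Re b,m)$ is a complex-weighted graph with sectoriality constant $0$ and satisfies Assumption~\ref{a:graph} is a step the paper leaves implicit.
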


The following theorem gives one more basic characterization of recurrence.
\begin{theorem}\label{thm::1inD0}
A graph $(X,b,m)$ is recurrent if and only if $1\in \D_0.$
\end{theorem}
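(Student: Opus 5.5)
The plan is to reduce both directions to the real-weighted graph $(X,\Re b,m)$, using Theorem~\ref{thm::rec} and the identity $\Re\Q_b(\cdot)=\Q_{\Re b}(\cdot)$. The key observation is that membership in $\D_0$ is governed only by pointwise convergence together with $\Re\Q(f-\phi_n)\to 0$. Hence $\D_0$ for $(X,b,m)$ coincides, as a set, with $\D_0$ for $(X,\Re b,m)$. One could then simply quote \cite[Theorem 6.1]{KellerLenzWojcBook} together with Corollary~\ref{cor::Rebrec}. I would nevertheless also give a direct argument.

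\emph{Recurrence implies $1\in\D_0$.} Suppose the graph is recurrent and fix $x$. By Theorem~\ref{thm::rec}(ii) and its ``all $x$'' clause, choose a finite exhaustion $(K_n)$ and, for each $n$, a function $\phi_n\in C_c(X)$ such that $\Re\Q(\phi_n)<1/n$ and $\phi_n(y)=1$ for every $y\in K_n$. To obtain the latter, one could use the Dirichlet-problem minimizer with value $1$ on $K_n$. Alternatively, for a single vertex $y$, pick $\phi_y$ with $\phi_y(y)=1$ and small energy; then use connectedness to control $|\phi(z)-1|$ at neighbours through $|\phi(y)-\phi(z)|^2\Re b(y,z)\le 2\Re\Q(\phi)$.

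The cleanest route is the second one. Since $\Re\Q(\phi_n)\to0$ and $\phi_n(x)=1$, propagating along paths gives $\phi_n\to1$ pointwise: for $y\sim z$ we have $|\phi_n(y)-\phi_n(z)|^2\le 2\Re\Q(\phi_n)/\Re b(y,z)\to0$, and each vertex is reached by a finite path from $x$. Also $\Re\Q(1-\phi_n)=\Re\Q(\phi_n)\to0$, because constants do not change differences. Hence $1\in\D_0$.

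\emph{$1\in\D_0$ implies recurrence.} Take $\phi_n\in C_c(X)$ with $\phi_n\to1$ pointwise and $\Re\Q(1-\phi_n)=\Re\Q(\phi_n)\to0$. Then $\phi_n(x)\to1$, so $\phi_n(x)\neq0$ for large $n$. Set $\psi_n=\phi_n/\phi_n(x)$. Then $\psi_n(x)=1$ and $\Re\Q(\psi_n)=|\phi_n(x)|^{-2}\Re\Q(\phi_n)\to0$, so condition (ii) of Theorem~\ref{thm::rec} holds.

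The main subtlety is only in the first direction: guaranteeing pointwise convergence to $1$ everywhere, not just at $x$. This is handled by the path-propagation estimate using $\Re b(y,z)>0$ on edges and connectedness (Assumption~\ref{a:graph}). No issue arises from complex values, because only $\Re\Q$ enters and it equals $\Q_{\Re b}$.
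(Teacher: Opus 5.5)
Your proof is correct, but it takes a more elementary route than the paper. The paper passes both directions to the real-weighted graph $(X,\Re b,m)$ via Corollary~\ref{cor::Rebrec} and the identity $\Re\Q_b=\Q_{\Re b}$. It then quotes the real-weighted characterization \cite[Theorem 6.1(i)]{KellerLenzWojcBook}. In the converse direction it replaces $\phi_n$ by $|\phi_n|$, using $\Q_{\Re b}(1-|\phi_n|)\le\Q_{\Re b}(1-\phi_n)$, to obtain the real-valued approximants the cited theorem needs. This is essentially the ``one could simply quote'' option you mention and set aside.

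Your direct argument uses only the definitions. From $\phi_n(x)=1$ and $\Re\Q(\phi_n)\to0$, the edge bound $|\phi(y)-\phi(z)|^2\Re b(y,z)\le\Re\Q(\phi)$ (your factor $2$ is harmless), together with $\Re b>0$ on edges and connectedness, gives $\phi_n\to1$ pointwise. For the converse, normalizing $\psi_n=\phi_n/\phi_n(x)$ and using $\Q(\lambda\phi)=|\lambda|^2\Q(\phi)$ works directly with complex-valued functions, so no passage to $|\phi_n|$ is needed.

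Your route is self-contained and gives a small bonus. The first direction needs recurrence at only one vertex, and the second produces condition (ii) of Theorem~\ref{thm::rec} at every vertex. So you obtain the ``some $\Leftrightarrow$ all'' equivalence for free, whereas the paper imports it from the real theory. The paper's route, in turn, is two lines once the real-weighted theory is accepted.

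One clean-up: drop the preliminary requirement that $\phi_n\equiv1$ on all of $K_n$, and the suggestion to get it from a ``Dirichlet-problem minimizer''. Theorem~\ref{thm::rec} only pins $\phi_n$ at a single vertex. In the complex setting the Dirichlet solution is not an energy minimizer, as the paper remarks before Theorem~\ref{thm::reccap}. Your final argument never uses this property, and it also does not need the ``all $x$'' clause, only recurrence at the fixed $x$.
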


\begin{proof}
``$\Rightarrow$'' Let the graph $(X,b,m)$ be recurrent. Then the graph $(X,\Re b,m)$ is recurrent by Corollary \ref{cor::Rebrec}. Hence, by \cite[Theorem 6.1(i)]{KellerLenzWojcBook}, there exists a sequence $(\phi_n)_{n\in\Bbb N}\subset C_c(X)$ with $\phi_n\to f$ pointwise and 
$$
\Q_{\Re b}(1-\phi_n)=\dfrac12\sum_{x,y\in X}|\phi_n(x)-\phi_n(y)|^2\Re b(x,y)\to 0\mbox{ as }n\to \infty.
$$ 
Hence, $\Re \Q(1-\phi_n)=\Q_{\Re b}(1-\phi_n)\to 0$ and $1\in \D_0$.

``$\Leftarrow$''
Let $1\in \D_0$. Then there exists $(\phi_n)_{n\in\Bbb N}\subset C_c(X)$ such that $Q(1-\phi_n)\to 0$ as $n\to \infty$. Hence, 
$$
\Q_{\Re b}(1-|\phi_n|)\le Q_{\Re b}(1-\phi_n)=\Re Q(1-\phi_n)\to 0\mbox{ as }n\to \infty,
$$ 
and the graph $(X,\Re b,m)$ is recurrent by \cite[Theorem 6.1(i)]{KellerLenzWojcBook}, which by Corollary \ref{cor::Rebrec} implies that $(X,b,m)$ is recurrent.
\end{proof}

\subsection{Capacity}

To define the capacity of an infinite graph we start with finite approximations  and follow the approach from \cite{Grimmett} and \cite{Muranova3} for networks. 

Let $(X,b,m)$ be an infinite complex-weighted graph, $K\subset  X$ be finite and connected, $ x\in K $. Then the {\em effective capacity of $K$ at $x$} is defined as
$$
\cp_{K}(x)=\L u(x)m(x),
$$
where $u$ is the solution of the {\em Dirichlet problem:}
$$
\begin{cases}
 u(x)=1,\\
\L u(x)=0\mbox{ on  }K\setminus\{x\},\\
u= 0 \mbox{ on  }X\setminus K,
\end{cases}
$$
which is unique due to Lemma \ref{lem::dirprr}.
Note that although $ m $ appears in the definition of effective capacity of $K$ at $x$,  indeed it does not depend on $m$ since it cancels with the $ m $ appearing in the definition of the Laplacian. Moreover, by Green's formula we obtain:
$$
\cp_{K}(x)=Q(u).
$$

We remind that  a sequence of finite connected subsets $(K_n)_{\Bbb N\cup\{0\}}$ is a finite exhaustion of $(X,b,m)$ if $K_n\subset  X$, $K_n\subset K_{n+1}$ for all $n\in \Bbb N\cup\{0\}$ and $X=\cup_{\Bbb N\cup\{0\}} K_n$.

\begin{definition}
Let $(X,b,m)$ be an infinite  complex-weighted graph, $x\in X$ be a fixed vertex, $(K_n)$ be a finite exhaustion of the graph such that $x\in K_0$. Then the limit
$$
 \cp(x) :=\lim_{n\to\infty }\cp_{K_n}(x)
$$
is called an {\em (effective) capacity} of $x$.
\end{definition}

\begin{theorem}\label{thm::effCap}
The effective capacity is well-defined.
 i.e. the limit exists and does not depend on the exhaustion.
\end{theorem}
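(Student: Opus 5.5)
We plan to reduce the complex case to the real one through Theorem \ref{thm::main}, replacing monotone convergence by Vitali/Montel-type convergence of holomorphic functions. First fix $\cfreq_0\in\Bbb H_r$ with $|\Im \cfreq_0|/\Re \cfreq_0\ge c$ (for instance $\cfreq_0=1+ic$, or $\cfreq_0=1+i$ when $c=0$). By Theorem \ref{thm::main} there is an electrical network $(X,\a^{(\cfreq)},m)$ with $\a^{(\cfreq_0)}=b$. For every $\cfreq\in\Bbb H_r$ the network is a complex-weighted graph by Theorem \ref{thm:ImsRes}. Hence for each finite connected $K\ni x$ the Dirichlet problem with weights $\a^{(\cfreq)}$ has a unique solution $u^{(\cfreq)}$ (Lemma \ref{lem::dirprr} with $\alpha=0$), and we get a function $\cfreq\mapsto \cp^{(\cfreq)}_K(x)$ on $\Bbb H_r$.

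The next step is to show this function is holomorphic in $\cfreq$. The Dirichlet problem is a finite linear system whose coefficients are rational in $\cfreq$ and have no poles in $\Bbb H_r$, and it is uniquely solvable there. By Cramer's rule $u^{(\cfreq)}$ is rational without poles in $\Bbb H_r$, and so is $\cp^{(\cfreq)}_K(x)=\L^{(\cfreq)}u^{(\cfreq)}(x)m(x)$.

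The third step is a locally uniform bound independent of $K$. The estimate \eqref{est::Qvr} with $\alpha=0$, $r=1$ gives
$$|\cp_K^{(\cfreq)}(x)|=|\Q^{(\cfreq)}(u^{(\cfreq)})|\le (1+c_\cfreq^2)\sum_{y\sim x}|\a^{(\cfreq)}(x,y)|,$$
with $c_\cfreq=|\Im\cfreq|/\Re\cfreq$. The right side is locally bounded on $\Bbb H_r$ uniformly in $K$, since $x$ has finitely many neighbours and each admittance is continuous on $\Bbb H_r$. So for any exhaustion $(K_n)$ the family $\{\cp^{(\cfreq)}_{K_n}(x)\}_n$ is a normal family on $\Bbb H_r$.

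The fourth step is convergence on the positive real axis. For $\cfreq>0$ the weights $\a^{(\cfreq)}$ are positive reals, so $\cp^{(\cfreq)}_{K_n}(x)$ is the classical effective conductance. By the Dirichlet principle it is a minimum of energy over a class that grows with $n$, so it is monotonically decreasing in $n$. It therefore converges to a limit independent of the exhaustion: given two exhaustions, interlace them and use monotonicity. Vitali's theorem then yields locally uniform convergence on all of $\Bbb H_r$, since the sequence is locally bounded and converges on a set with an accumulation point in $\Bbb H_r$. The limit agrees with the one coming from any other exhaustion, because the two limits coincide on $(0,\infty)$ and hence everywhere by the identity theorem. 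Evaluating at $\cfreq=\cfreq_0$ gives the statement, because $\cp^{(\cfreq_0)}_{K}(x)=\cp_K(x)$ for the original weights $b$.

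The main obstacle is the third step: the uniform-in-$K$ locally bounded estimate. It relies on \eqref{est::Qvr} holding for the network at every $\cfreq$ with the $\cfreq$-dependent sectoriality constant. The monotonicity in the fourth step is standard, but must be stated with care for real positive weights, using that the minimizer of $\Q$ under $\phi(x)=1$ and $\supp\phi\subset K$ is the harmonic solution.
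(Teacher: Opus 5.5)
Your proposal is correct and follows essentially the paper's route: extend $b$ to an electrical network via Theorem \ref{thm::main}, use holomorphy of the finite capacities in $\cfreq$, take the classical monotone limit on the positive real axis, apply the identity theorem, and evaluate at $\cfreq_0$. The one difference is that the paper cites \cite[Theorem 6(a)]{Muranova3} for the existence and holomorphy of the limit, whereas you derive them from the $K$-independent bound \eqref{est::Qvr} at $\alpha=0$ and Vitali's theorem; this is the same normal-family argument the paper later uses for the Green's function, and it makes the extension to arbitrary exhaustions explicit rather than asserted.
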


\begin{proof}
In \cite[Theorem 6(a)]{Muranova3} it is proven, that the limit of effective capacities exists in the case of electrical networks and is a holomorphic function on $\cfreq\in \Bbb H_r$  (see Definition \ref{def::elnetwork}),  for the ball exhaustion $(B_n)_{n\in \Bbb N\cup\{0\}}$, $B_n(x)=\{y\in X\mid\dist (x,y)< n\}$, where by $\dist(x,y)$ one means the lenght of the shortest path between vertices $x,y\in X$. Using the same outline of proof, one can show that the limit exists for any exhaustion $(K_n)_{n\in \Bbb N\cup\{0\}}$. Moreover, since the capacity in this case is a holomorphic function on the parameter $\cfreq\in \Bbb H_r$ and the limits coincide for all $\cfreq\in \Bbb R^+$ due to the classical theory of real-weighted graph, we immediately get, that the effective capacity of electrical network does not depend on the exhaustion, i.e. the capacity is well-defined for the electrical networks.

Finally, due to Theorem \ref{thm::main}, the capacity is well-defined for any complex-weighted graph, since the uniform limit (see again \cite[Theorem 6(a)]{Muranova3}) on any compact subset of $\Bbb H_r$ implies pointwise limit.
\end{proof}
In the case of real-weighted graphs, the solution of the Dirichlet problem on any finite $K$ minimizes the energy among all the functions, supported on $K$. This fact almost immediately implies, that recurrence  is equivalent to a zero capacity for some (all) vertices of  an infinite graph. In the complex-weighted case the minimization property for the solution of the Dirichlet problem fails. In the next theorem we show,  that the recurrence is still equivalent to the zero capacity.

\begin{theorem}\label{thm::reccap}
A graph $(X,b,m)$ is recurrent if and only if $\cp(x)= 0$ for some (all) $x\in X$.
\end{theorem}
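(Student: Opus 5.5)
Fix $x\in X$ and a finite exhaustion $(K_n)$ with $x\in K_0$. Let $u_n$ be the solution of the Dirichlet problem on $K_n$, so that $\cp_{K_n}(x)=\Q(u_n)$ and $u_n\in C_c(X)$ with $u_n(x)=1$. The plan is to compare $\Q(u_n)$ with the infimum in Theorem \ref{thm::rec}(ii), in both directions.

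\emph{Direction ``$\cp(x)=0\Rightarrow$ recurrent''.} This direction is immediate. If $\cp(x)=\lim_n\Q(u_n)=0$, then $\Re\Q(u_n)\to 0$. Since each $u_n$ is admissible in Theorem \ref{thm::rec}(ii), the infimum there is $0$ and the graph is recurrent.

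\emph{Direction ``recurrent $\Rightarrow\cp(x)=0$''.} This is the main obstacle, because $u_n$ no longer minimizes energy among functions supported in $K_n$ with $u_n(x)=1$. I would replace minimality by a quasi-minimality estimate that comes from sectoriality.

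Let $\phi\in C_c(K_n)$ with $\phi(x)=1$. Then $\psi:=\phi-u_n$ is supported in $K_n\setminus\{x\}$. Since $\L u_n=0$ on $K_n\setminus\{x\}$, Green's formula gives $\Q(u_n,\psi)=\sum_y \L u_n(y)\overline{\psi(y)}m(y)=0$. Hence
\[
\Q(u_n,\phi)=\Q(u_n,u_n).
\]
Applying \eqref{eq::CSforsect} yields
\[
|\Q(u_n)|=|\Q(u_n,\phi)|\le (1+c)\,(\Re\Q(u_n))^{1/2}(\Re\Q(\phi))^{1/2}.
\]
Using $\Re\Q(u_n)\le|\Q(u_n)|$, we get
\[
\Re\Q(u_n)\le |\Q(u_n)|\le (1+c)^2\,\Re\Q(\phi).
\]
Now assume the graph is recurrent and let $\varepsilon>0$. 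By Theorem \ref{thm::rec}(ii) there is $\phi\in C_c(X)$ with $\phi(x)=1$ and $\Re\Q(\phi)<\varepsilon$. Because the graph is locally finite, $\supp\phi$ is finite, so $\supp\phi\subset K_n$ for all large $n$. The estimate then gives $|\cp_{K_n}(x)|\le(1+c)^2\varepsilon$ for all large $n$. By Theorem \ref{thm::effCap} the limit $\cp(x)$ exists, so $|\cp(x)|\le (1+c)^2\varepsilon$, and hence $\cp(x)=0$.

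\emph{Some versus all.} Recurrence itself does not depend on the chosen vertex, by Theorem \ref{thm::rec}. Therefore $\cp(x)=0$ for one $x$ implies recurrence, which in turn implies $\cp(y)=0$ for every $y$.

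The one point I would check carefully is the orthogonality identity $\Q(u_n,\psi)=0$. Green's formula (b) requires $u_n\in\D\cap\ell^2$ and $\psi\in C_c(X)$, with $\L u_n\in\ell^2$. All of these hold since $u_n$ has finite support. The order of arguments in the sesquilinear form also has to be tracked correctly, and here $\Q(u_n,\psi)=(\L u_n\mid\psi)$ vanishes because $\L u_n$ is zero on the support of $\psi$.
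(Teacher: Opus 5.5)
Your proof is correct and takes essentially the paper's route: the Green's-formula identity $\Q(u_n,\phi)=\Q(u_n)$ for admissible $\phi$ supported in $K_n$, combined with the sectorial Cauchy--Schwarz inequality \eqref{eq::CSforsect}, gives $|\Q(u_n)|\le(1+c)^2\Re\Q(\phi)$. The paper takes $\phi=\widetilde u_n$, the solution of the Dirichlet problem on $(X,\Re b,m)$, and imports $\Q_{\Re b}(\widetilde u_n)\to 0$ from the classical real-weighted theory; you instead use an arbitrary near-optimal test function whose support (finite simply because $\phi\in C_c(X)$, not because of local finiteness) eventually lies in $K_n$, which makes your argument self-contained.
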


\begin{proof}
Due to Definition \ref{def::rec} of recurrence it is enough to prove that $\cp(x)=~0$ is equivalent to
\begin{equation}\label{eq::reccapx}
\inf \;\{\;|\Q(\phi)|\;|\;\phi(x)=1, \phi\in C_c(X)\;\}=0,
\end{equation}
for a fixed $x\in X$. Then the statement for all $x\in X$ will follow from Theorem \ref{thm::rec}.

 Let $(K_n)_{n\in \Bbb N\cup \{0\}}$ be a finite exhaustion of $(X,b,m)$, $x\in K_0$.
Let $(u_n)_{n\in \Bbb N}$ be the solutions of the Dirichlet problems
\begin{equation*}
\begin{cases}
u_n(x)=1,\\
\L_b u_n(y)=0,\; y\ne x, y\in K_n,\\
u_n(y)=0,\; y\in X\setminus K_n.
\end{cases}
\end{equation*}
\begin{itemize}[leftmargin=*]
\item
Let $\cp(x)= 0$. Then $\Q(u_n)=\cp_{K_n}(x)\to 0, n\to \infty$, i.e. \eqref{eq::reccapx} holds.
\item
Let now \eqref {eq::reccapx} holds. Let us consider the Dirichlet problems on $(X,\Re b,m)$, i.e. 
\begin{equation*}
\begin{cases}
\widetilde u_n(x)=1,\\
\L_{\Re b} \widetilde u_n(a)=0,\; y\ne x, y\in K_n,\\
\widetilde u_n(y)=0,\; y\in X\setminus K_n.
\end{cases}
\end{equation*}
Note, that $ \Q_{\Re b}(\widetilde u_n)\to 0$, as $n\to \infty$, due to the classical theory of real-weighted graphs (see \cite[Theorem 6.1(xi.b)]{KellerLenzWojcBook}), since $(X, \Re b, m)$ is recurrent by  Corollary \ref{cor::Rebrec}. Hence 
$$\Re  \Q_b(\widetilde u_n) =  \Q_{\Re b}(\widetilde u_n)\to 0\mbox{ as }n\to\infty.
$$
 Further, since all functions are finitely supported, we get by Green's formula 
\begin{align*}
 &\Q_b(u_n, \widetilde u_n)=(\L_b u_n\;|\; \widetilde u_n )=\L_b u_n(x)m(x)\\
 &=\L_b u_n(x)u_n(x)m(x)=(\L_b u_n\;|\; u_n )=\Q_b(u_n).
\end{align*}
Using \eqref{eq::CSforsect}, we get
$$
\Re  \Q_b(u_n)\le | \Q_b(u_n)|= | \Q_b(u_n, \widetilde u_n)| \le(1+c)(\Re  \Q_b(u_n))^\frac 12 (\Re  \Q_b(\widetilde u_n))^\frac 12,
$$
where $c$ is the sectoriality constant \eqref{bsect} for $(X,b,m)$.
Therefore, 
$$
\Re  \Q_b(u_n)\le(1+c)^2 \Re  \Q(\widetilde u_n)=(1+c)^2  \Q_{\Re b}(\widetilde u_n)\to 0\mbox{ as }n\to\infty.
$$
and $| \Q_b(u_n)|\le (1+c)\Re  \Q_b(u_n) \to 0$ as $ n\to \infty$, where the last inequality is due to the sectoriality of $\Q$.
\end{itemize}
\end{proof}
\begin{remark}
Since $\widetilde u_n$ minimizes the energy on all the real-valued  functions, supported on $K_n$ for the graph $(X,\Re b,m)$ and $ \Re \Q_b(|u_n|)\le  \Re \Q_b(u_n)$ by convexity of absolute value,  we get
$$
\Re  \Q_b(\widetilde u_n)\le \Re  \Q_b(|u_n|)\le\Re  \Q_b(u_n)\le(1+c)^2 \Re  \Q_b(\widetilde u_n).
$$
\end{remark}

\subsection{Green's function} Now we define Green's function for complex-weighted graph. The definition requires some substantial work and is based on Theorem \ref{thm::main} and estimate \eqref{est::Qvr}. Moreover, further in this section we show, that the relation of the Green's function to recurrence is the same as in the case of real-weighted graphs, i.e.  the Green's function is equal to infinity if and only if the graph is recurrent, see Corollary \ref{cor::Greenrec}.
\begin{definition}
Let $(X,b,m)$ be a complex-weighted graph, $x,y\in X$. We define the {\em Green's function} $G:X\times X\to\Bbb C\cup \{\infty\}$ by
$$
G(x,y)=\lim_{\substack{\alpha\to 0,\\\alpha>0}}\int_0^\infty e^{-\alpha t}T(t)1_x(y)dt,
$$
where $T(t)$ is the contractive holomorphic  $C_0$-semigroup, generated by the Dirichlet Laplacian $L^{(D)}$.
\end{definition}

\begin{theorem}
Let $(X,b,m)$ be a complex-weighted graph, $x,y\in X$.  The Green's function is well-defined, i.e. the corresponding limit exists in $\Bbb C\cup\{\infty\}$.

\end{theorem}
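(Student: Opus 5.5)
The plan is to identify $\int_0^\infty e^{-\alpha t}T(t)1_x\,dt$ with the resolvent $(L^{(D)}+\alpha)^{-1}1_x$, approximate it by finite-volume resolvents, and then control the limit $\alpha\to 0$ using the electrical network extension together with the estimate \eqref{est::Qvr}.

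By the resolvent identity stated after Lemma \ref{lem::LK=mathcalL}, for $\alpha>0$ we have
$$
g_\alpha(y):=\int_0^\infty e^{-\alpha t}T(t)1_x(y)\,dt=(L^{(D)}+\alpha)^{-1}1_x(y).
$$
So we must show that $\lim_{\alpha\to0+}g_\alpha(y)$ exists in $\Bbb C\cup\{\infty\}$.

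\textbf{Step 1: finite approximation.} Fix a finite exhaustion $(K_n)$ with $x,y\in K_0$. By Theorem \ref{thm::LDeqlimLn}, $(L_{K_n}+\alpha)^{-1}1_x\to g_\alpha$ strongly in $\ell^2$, hence pointwise. By Theorem \ref{LKinvert} and its proof,
$$
(L_{K_n}+\alpha)^{-1}1_x=\frac{v^{(n)}_\alpha}{(\L v^{(n)}_\alpha+\alpha v^{(n)}_\alpha)(x)},
$$
where $v^{(n)}_\alpha$ solves \eqref{prob::dirpralpha} on $K_n$ with $r=1$. Thus
$$
g_\alpha(y)=\lim_n \frac{v^{(n)}_\alpha(y)}{\kappa_n(\alpha)},\qquad \kappa_n(\alpha):=m(x)^{-1}\bigl(\mathcal Q(v^{(n)}_\alpha)+\alpha\|v^{(n)}_\alpha\|^2\bigr)
$$
by \eqref{eq::aLplusalphaQ}. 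The numerators and denominators are controlled uniformly in $n$ and $\alpha\in(0,1]$ by \eqref{est::Qvr}. Connectedness then bounds $|v^{(n)}_\alpha(y)|$ along a path from $x$ to $y$, since each edge difference is controlled by the energy and the bound does not depend on $K_n$.

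\textbf{Step 2: holomorphic extension.} Next apply Theorem \ref{thm::main}: pick $\cfreq_0$ with $|\Im\cfreq_0|/\Re\cfreq_0\ge c$ and realize $b=\a^{(\cfreq_0)}$. The key idea is to treat $\alpha$ as part of the network. The term $\alpha$ at $x$ and elsewhere acts as a grounding admittance: add a vertex $o$ and edges $\{z,o\}$ with weight $\alpha m(z)$. The $\alpha$ in $\mathcal L+\alpha$ then becomes an ordinary edge to the ground (a resistor network enhancement). In this way $\kappa_n(\alpha)$ and $v_\alpha^{(n)}(y)$ become values of rational functions of $\cfreq$ coming from finite electrical networks. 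These functions are holomorphic on $\Bbb H_r$ and locally bounded uniformly in $n$, by the analogue of \eqref{est::Qvr} for each $\cfreq$ in a compact set, with sectoriality constant $|\Im\cfreq|/\Re\cfreq$ from Theorem \ref{thm:ImsRes}.

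By Montel/Vitali, convergence for real $\cfreq$ (the classical real-weighted case, where the $n\to\infty$ and $\alpha\to0$ limits are monotone) upgrades to locally uniform convergence on $\Bbb H_r$. For real positive weights, monotonicity in $\alpha$ and $n$ gives $\kappa_\infty(\alpha)\to\cp(x)$ as $\alpha\to0$, and $v_\alpha(y)\to v_0(y)$. Vitali then gives the same limits at $\cfreq_0$; here the uniform bounds in $\alpha$ are used again so that the family in $(\cfreq,\alpha)$ is normal. Hence
$$
\lim_{\alpha\to0}\kappa_\infty(\alpha)=\cp(x),
$$
the capacity of Theorem \ref{thm::effCap}, and $\lim_\alpha v_\alpha(y)=:v_0(y)$ exists and is finite.

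\textbf{Step 3: conclusion.} Then $G(x,y)=v_0(y)/\cp(x)$ if $\cp(x)\neq0$. If $\cp(x)=0$, then $G(x,y)=\infty$ provided $v_0(y)\ne0$. To secure this, show $|v_\alpha(y)|$ stays bounded below, for instance by comparing with the real network $\Re b$ via the Cauchy--Schwarz argument of Theorem \ref{thm::reccap}; alternatively, note $v_0\equiv1$-type behaviour in the recurrent case.

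The main obstacle is Step 2: justifying the interchange of $\lim_n$ and $\lim_{\alpha\to0}$ for complex weights, where no monotonicity is available. This is handled through normality of the holomorphic families provided by Theorem \ref{thm::main} and the $K$-independent estimate \eqref{est::Qvr}.
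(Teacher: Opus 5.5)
Your plan is essentially the paper's proof. It goes resolvent $\to$ finite-volume ratios $v_\alpha^{(n)}(y)/\kappa_n(\alpha)$ via Theorems \ref{thm::LDeqlimLn} and \ref{LKinvert}, then embeds the graph in an electrical network by Theorem \ref{thm::main}, uses bounds uniform in $n$ and $\alpha<1$ from \eqref{est::Qvr} together with the path Cauchy--Schwarz estimate, and finally applies Montel/Vitali with identification on the positive real axis to interchange $\lim_n$ and $\lim_{\alpha\to0}$. The ground-vertex device is harmless but unnecessary, since Lemma \ref{lem::dirprr} applies directly to $\L^{(\cfreq)}+\alpha$.

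Step 3 needs tightening, and the paper settles it exactly by your second suggestion rather than by a comparison with $\Re b$. If $\cp(x)=0$, then $\Q(v_0^{(n)})=\cp_{K_n}(x)\to0$, so your Step 1 path bound forces $\lim_n v_0^{(n)}(y)=1\neq 0$ and the ratio tends to $\infty$. There is also a small normalization slip: $\lim_{\alpha}\kappa_\infty(\alpha)=\cp(x)/m(x)$, not $\cp(x)$.
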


\begin{proof}
By resolvent equality and Theorem \ref{thm::LDeqlimLn} we have:
\begin{equation}\label{eq::GxyeqL}
G(x,y)=\lim_{\substack{\alpha\to 0,\\\alpha>0}}(L^{(D)}+\alpha)^{-1}1_x(y)=\lim_{\substack{\alpha\to 0,\\\alpha>0}}\lim_{n\to\infty}({L_n}+\alpha)^{-1}1_x(y),
\end{equation}
where $L_n:=L_{K_n}, n\in \Bbb N\cup\{0\}$ are the Dirichlet Laplacians with respect to $K_n$ (see \eqref{eq::LKres}),  for some finite exhaustion $(K_n)_{ n\in \Bbb N\cup\{0\}}$ of $(X,b,m)$. Without loss of generality we can assume $x\in K_0$.  It is clear by \eqref{eq::LKres}, that
$$
(L_n+\alpha)^{-1}1_x(y)=
\begin{cases}(\widehat L_n+\alpha)^{-1}1_x(y),\quad y\in K_n\\
0, \mbox{otherwise},
\end{cases}
$$
where $\widehat L_n$ is the Laplacian on $\ell^2(K_n,m_{K_n})$. Hence, the quantity $({L_n}+\alpha)^{-1}$ is well-defined for any $x, y\in X$.  Moreover, for a fixed $y\in X$ there exist $N\in \Bbb N$ such that $y\in  K_N$ and, hence, 
\begin{equation}\label{eq::LnN}
(L_n+\alpha)^{-1}1_x(y)=(\widehat L_n+\alpha)^{-1}1_x(y) \mbox{ for all }n>N.
\end{equation}
 Therefore we need to prove the existence of the limit
$$
\lim_{\substack{\alpha\to 0,\\\alpha>0}}\lim_{n\to\infty}({L_n}+\alpha)^{-1}1_x(y)=\lim_{\substack{\alpha\to 0,\\\alpha>0}}\lim_{n\to\infty}({\widehat L_n}+\alpha)^{-1}1_x(y).
$$
By the proof of Theorem \ref{LKinvert} we have 
\begin{equation}\label{eq::LhateqL}
(\widehat {L}_n+\alpha)^{-1}1_x(y) =\dfrac{v_{\alpha,n}(y)}{(\L   v_{\alpha,n}+\alpha  v_{\alpha,n})(x)} \mbox{ for all }\alpha>0, n\in \Bbb N,
\end{equation}
where $v_{\alpha,n},n\in \Bbb N$, are the solutions of the following Dirichlet problems:
\begin{equation}\label{eq::dirpralpha1}
\begin{cases}
 v_{\alpha,n}(x)=1,\\
(\L +\alpha) v_{\alpha,n}=0\mbox{ on  }K_n\setminus\{x\},\\
 v_{\alpha,n}= 0 \mbox{ on  }X\setminus K_n.
\end{cases}
\end{equation}

By Theorem \ref{thm::main} there exist an electrical network $(X,\a^{(\cfreq)}, m)$ such that  for $s_0=1+ic$ (i.e $\Im s_0/\Re s_0=c$), where $c$ is the sectoriality constant of the graph $(X,b,m)$, the following holds
$$
\a^{(\cfreq_0)}(x,y)=b(x,y),
$$
for all $x,y\in X$. Let us extend the Dirichlet problem \eqref{eq::dirpralpha1} to the electrical network $(X,\a^{(\cfreq)},m)$, i.e. we have 
\begin{equation*}
\begin{cases}
 v_{\alpha,n}^{(\cfreq)}(x)=1,\\
(\L^{(\cfreq)} +\alpha) v_{\alpha,n}^{(\cfreq)}=0\mbox{ on  }K_n\setminus\{x\},\\
 v_{\alpha,n}^{(\cfreq)}= 0 \mbox{ on  }X\setminus K_n.
\end{cases}
\end{equation*}
By Lemma \ref{lem::dirprr} we obtain
\begin{align}\label{eq::Lv1cfreq}
\Big|\big(\L   v_{\alpha,n}^{(\cfreq)}+\alpha  v_{\alpha,n}^{(\cfreq)}\big)(x)\Big|&\le \Big|\mathcal Q\big(v_{\alpha,n}^{(\cfreq)}\big)\Big|+\alpha \| v_{\alpha,n}\|^2\notag\\
&\le \left(1+\dfrac{|\Im\cfreq|^2}{(\Re {\cfreq})^2}\right)\left(\sum_{\substack{y\in X:\\y\sim x}}\big|\a^{(\cfreq)}(x,y)\big|+\alpha\cdot m(x)\right),
\end{align}
where we have used  Theorem \ref{thm:ImsRes}.
By simple calculations, see \cite[proof of Corollary 2]{Muranova3}, we have:
\begin{equation}\label{eq::sumas}
\sum_{\substack{y\in X:\\y\sim x}} \a^{(\cfreq)}(x,y)
\le
\frac{1 + |\cfreq|^2}{\Re \cfreq}
\sum_{\substack{y\in X:\\y\sim x}} 
\left(
R _{xy} + L _{xy} +D_{xy} 
\right)^{-1}, \mbox { for any }x,y\in X.
\end{equation}
Combining this with \eqref{eq::Lv1cfreq} we get
\begin{equation}\label{eq::unifombound}
\Big|\big(\L^{(\cfreq)}   v_{\alpha,n}^{(\cfreq)}+\alpha  v_{\alpha,n}^{(\cfreq)}\big)(x)\Big|\le \dfrac{|s|^2}{(\Re s)^2} \left(\frac{1 + |\cfreq|^2}{\Re \cfreq}C_1+C_2\right),
\end{equation}
where $C_1,C_2$ do not depend on $s$ and $n$. Hence, the function $\big(\L^{(\cfreq)}   v_{\alpha,n}^{(\cfreq)}+\alpha  v_{\alpha,n}^{(\cfreq)}\big)(x)$ is uniformly bounded for any $n\in \Bbb N$  in any domain
$$
\{\Re \cfreq \ge\varepsilon, |\cfreq|\le C\}, \quad \varepsilon, C\in \Bbb R^+.
$$
Hence, by Montel's theorem (see, e.g. \cite[p. 153]{Conway}), the sequence 
$$
\big(\L^{(\cfreq)}   v_{\alpha,n}^{(\cfreq)}+\alpha  v_{\alpha,n}^{(\cfreq)}\big)(x), \quad n\in \Bbb N,
$$
has a normally converging subsequence. Since the limit of the sequence is known to exist for any $s\in \Bbb R_+$ (in this case we get a real-weighted graph), and the holomorphic function is uniquely determined by its values on the real line, we get that there exist the limit 
\begin{equation}\label{eq::firstLim}
\lim_{n\to \infty}\big(\L^{(\cfreq)}   v_{\alpha,n}^{(\cfreq)}+\alpha  v_{\alpha,n}^{(\cfreq)}\big)(x),
\end{equation} 
for any $\alpha\ge 0$, and this limit is a holomorphic function on $\Bbb H_r$.  
Further, since \eqref{eq::unifombound} implies uniform boundness of $\big(\L^{(\cfreq)}   v_{\alpha,n}^{(\cfreq)}+\alpha  v_{\alpha,n}^{(\cfreq)}\big)(x)$ for any $\alpha<1$, we obtain by the same line of arguments, that there exist a holomorphic limit
\begin{equation}\label{eq::secondLim}
\lim_{\substack{\alpha\to 0,\\\alpha>0}}\lim_{n\to\infty}\big(\L^{(\cfreq)}   v_{\alpha,n}^{(\cfreq)}+\alpha  v_{\alpha,n}^{(\cfreq)}\big)(x). 
\end{equation} 
Since the limit \eqref{eq::firstLim} for $\alpha=0$ and the limit \eqref{eq::secondLim} coincide on the real positive half-line (\cite[Theorem 6.26]{KellerLenzWojcBook}) we have:
\begin{equation}\label{eq::eqLim}\lim_{\substack{\alpha\to 0,\\\alpha>0}}\lim_{n\to\infty}\big(\L^{(\cfreq)}   v_{\alpha,n}^{(\cfreq)}+\alpha  v_{\alpha,n}^{(\cfreq)}\big)(x)=\lim_{n\to\infty}\big(\L^{(\cfreq)}   v_{0,n}^{(\cfreq)}\big)(x).
\end{equation} 
Moreover, since by Green's formula
$$
\Re \big(\L^{(\cfreq)}   v_{0,n}^{(\cfreq)}\big)(x)=\dfrac{\Re \Q(v^{(\cfreq)}_{0,n})}{m(x)}>0,
$$
for all $n>N$, for any $s\in\Bbb H_r$, by Hurwitz theorem (see, e.g. \cite[p. 152, 2.6 Corollary]{Conway}) the limit in \eqref{eq::eqLim} either has no zeros on the right half-plane or is identically zero.  Note that due to Theorem \ref{thm::reccap}, the latest case is equivalent to the recurrence of the graph $(X,b,m)$, where $b=a^{(s)}$ for (some) all $s\in \Bbb H_r$.

Now we  apply a similar argument to show the convergence of $v_{\alpha,n}^{(\cfreq)}(y)$, where $y\in X$ is fixed, i.e. it is enough to prove the uniform boundness of  $v_{\alpha,n}^{(\cfreq)}(y)$ for compact subsets of $\Bbb H_r$. Let $x=x_0\sim x_1\sim \dots \sim x_k=y$ be a path between $x$ and $y$. By Cauchy–Schwarz inequality,  \eqref{eq::Lv1cfreq}  and \eqref{eq::sumas} we get
\begin{align}\label{eq::estv}
\big|&v_{\alpha,n}^{(\cfreq)}(y)-1\big|^2=\big|v_{\alpha,n}^{(\cfreq)}(y)-v_{\alpha,n}^{(\cfreq)}(x)\big|^2\\
&\le
\left(
\sum_{i=1}^{k}
\big|v_{\alpha,n}^{(\cfreq)}(x_{i})-v_{\alpha,n}^{(\cfreq)}(x_{i-1})\big|
\,\sqrt{\Re\a^{(s)}(x_{i-1},x_i)}
\cdot
\frac{1}{\sqrt{\Re\a^{(s)}(x_{i-1},x_i)}}
\right)^2\notag \\
&\le \left(\sum_{i=1}^{k}
\big|v_{\alpha,n}^{(\cfreq)}(x_{i})-v_{\alpha,n}^{(\cfreq)}(x_{i-1})\big|^2
\Re\a^{(s)}(x_{i-1},x_i)\right)
\left (\sum_{i=1}^{k}\frac{1}{\Re\a^{(s)}(x_{i-1},x_i)}\right)\notag\\
&\le
\Big|\Q\big(v^{(\cfreq)}_{\alpha,n}\big)\Big|\,\left (\sum_{i=1}^{k}\frac{1}{\Re\a^{(s)}(x_{i-1},x_i)}\right)\notag\\
&\le \left(1+\dfrac{|\Im\cfreq|^2}{(\Re{\cfreq})^2}\right)\left(\sum_{\substack{y\in X:\\y\sim x}}\big|\a^{(\cfreq)}(x,y)\big|+\alpha\cdot m(x)\right)\left (\sum_{i=1}^{k}\frac{1}{\Re\a^{(s)}(x_{i-1},x_i)}\right)\notag\\
&\le \dfrac{|s|^2}{(\Re s)^2} \left(\frac{1 + |\cfreq|^2}{\Re \cfreq}C_1+C_2\right)\left (\sum_{i=1}^{k}\frac{1}{\Re\a^{(s)}(x_{i-1},x_i)}\right)\notag,
\end{align}
where $C_1, C_2$ does not depend on $s$. Let us estimate the last term, starting from an estimate of $\Re\a^{(s)}(z,w), z,w\in X, s\in \Bbb H_r$. By Definition \ref{def::elnetwork} of an electrical network we have
$$
\Re\a^{(s)}(z,w)=\dfrac{s}{L_{zw}s^2+R_{zw}s+D_{zw}}.
$$ 
We estimate, using $|s|^2\ge |s|\Re s$ for any $s\in \Bbb C$ in the second line:
\begin{align*}
\Re \a^{(s)}&(z,w)=\Re\dfrac{s(L_{zw}\overline s^2+R_{zw}\overline s+D_{zw})}{|L_{zw}s^2+R_{zw}s+D_{zw}|^2}\ge \Re\dfrac{s(L_{zw}\overline s^2+R_{zw}\overline s+D_{zw})}{(L_{zw}|s|^2+R_{zw}|s|+D_{zw})^2}\\
&=\Re\dfrac{L_{zw}|s|^2 \overline  s+R_{zw} |s|^2+D_{zw}s}{(L_{zw}|s|^2+R_{zw}|s|+D_{zw})^2}\ge (\Re s) \dfrac{L_{zw}|s|^2 +R_{zw} |s|+D_{zw}}{(L_{zw}|s|^2+R_{zw}|s|+D_{zw})^2}\\
&=(\Re s) \dfrac{1}{L_{zw}|s|^2+R_{zw}|s|+D_{zw}}\ge \dfrac{\Re s}{\max \{1,|s|,|s|^2\}} \dfrac{1}{L_{zw}+R_{zw}+D_{zw}}\\
&=\dfrac{\Re s}{\max\{ 1,|s|^2\}} \dfrac{1}{L_{zw}+R_{zw}+D_{zw}}\ge \dfrac{\Re s}{ 1+|s|^2} \dfrac{1}{L_{zw}+R_{zw}+D_{zw}}.
\end{align*}
since either $|s|<1$ or $|s|\le|s|^2$.
Combining this with \eqref {eq::estv} we obtain:
$$
\big|v_{\alpha,n}^{(\cfreq)}(y)-1\big|^2\le \dfrac{|s|^2(1+|s|^2)}{(\Re s)^3} \left(\frac{1 + |\cfreq|^2}{\Re \cfreq}C_1+C_2\right)C_3,
$$
where $C_3= \sum_{i=1}^{k}(L_{x_{i-1}x_i}+R_{x_{i-1}x_i}+D_{x_{i-1}x_i})$, $C_1,C_2,C_3$ do not depend on $s$, $n$ and any $\alpha<1$. Hence,  $v_{\alpha,n}^{(\cfreq)}(y)$ is uniformly bounded for any $n\in \Bbb N$  in any domain
$$
\{\Re \cfreq\ge\varepsilon, |\cfreq|\le C\}, \quad\varepsilon,  C\in \Bbb R^+.
$$

Therefore, again by Montel's theorem and existence of the limit on the real half-line, we conclude 
the existence and the equality of the limits
\begin{equation}\label{eq::eqLimv} 
\lim_{\substack{\alpha\to 0,\\\alpha>0}}\lim_{n\to\infty}v_{\alpha,n}^{(\cfreq)}(y)=\lim_{n\to\infty} v_{0,n}^{(\cfreq)}(y).
\end{equation}
Moreover, note that in the case $\Big|\Q\big(v^{(s)}_{0,n}\big)\Big|\to 0$ (i.e in the case of recurrence), we have $\lim_{n\to\infty} v_{0,n}^{(\cfreq)}(y)=1$ by the fourth line of \eqref{eq::estv}.

Finally,  \eqref{eq::LhateqL}, \eqref{eq::eqLim} and \eqref{eq::eqLimv} imply
$$
\lim_{\substack{\alpha\to 0,\\\alpha>0}}\lim_{n\to\infty}({\widehat L_n}+\alpha)^{-1}1_x(y)= \lim_{n\to\infty}({\widehat L_n})^{-1}1_x(y)=\lim_{n\to\infty}\dfrac{v_{0,n}^{(\cfreq_0)}(y)}{\big(\L^{(\cfreq_0)}   v_{0,n}^{(\cfreq_0)}\big)(x)},
$$
and the last limit is finite in the case of transient graph  and is $1/0=\infty$ in the case of recurrent graph. Due to \eqref{eq::GxyeqL} and \eqref{eq::LnN} the theorem is proven.
\end{proof}

The proof above immediately implies the following characterization of recurrence, which is known for the real-weighted graphs, where it can be proven by monotone convergence argument (see \cite[Chapter 6.4]{KellerLenzWojcBook}).
\begin{corollary}\label{cor::Greenrec}
Let $(X,b,m)$ be a complex-weighted graph. Then for its Green's function holds
$$
G(x,y)=\lim_{\substack{\alpha\to 0,\\\alpha>0}}(L^{(D)}+\alpha)^{-1}1_x(y)=\lim_{\substack{\alpha\to 0,\\\alpha>0}}\lim_{n\to\infty}({L_n}+\alpha)^{-1}1_x(y).
$$
Moreover, the graph is recurrent if and only if $G(x,y)=\infty$ for some (all) $x,y\in X$.
\end{corollary}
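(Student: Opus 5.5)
The first equality in Corollary~\ref{cor::Greenrec} is the definition of $G$ combined with the resolvent representation $(L^{(D)}+\alpha)^{-1}=\int_0^\infty e^{-\alpha t}T(t)\,dt$. The second equality comes from Theorem~\ref{thm::LDeqlimLn}: strong resolvent convergence gives convergence in $\ell^2(X,m)$, hence pointwise convergence at $y$. The real content is the recurrence characterization, and for that I plan to read off the final formula in the proof of the preceding theorem. Fix a finite exhaustion $(K_n)$ with $x\in K_0$ and take $\cfreq_0$ and the electrical network from Theorem~\ref{thm::main}. That proof showed
$$
G(x,y)=\lim_{n\to\infty}\frac{v_{0,n}^{(\cfreq_0)}(y)}{\big(\L^{(\cfreq_0)}v_{0,n}^{(\cfreq_0)}\big)(x)},
$$
where $v_{0,n}$ solves the Dirichlet problem on $K_n$ with $v_{0,n}(x)=1$. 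So the task reduces to identifying when the numerator and the denominator tend to zero.

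\emph{The denominator.} Since $\L v_{0,n}(x)m(x)=\cp_{K_n}(x)$, the denominator tends to $\cp(x)/m(x)$, using Theorem~\ref{thm::effCap}. By Theorem~\ref{thm::reccap}, this limit is $0$ exactly when the graph is recurrent.

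\emph{The numerator in the recurrent case.} Here $\Q(v_{0,n})=\cp_{K_n}(x)\to0$. The fourth line of \eqref{eq::estv}, with $\alpha=0$, bounds $|v_{0,n}(y)-1|^2$ by $|\Q(v_{0,n})|$ times a path constant independent of $n$. Hence $v_{0,n}(y)\to1$, and $G(x,y)=1/0=\infty$ for every $y$.

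\emph{The transient case.} Now $\cp(x)\neq0$, and the numerator limit $\lim_n v_{0,n}(y)$ exists and is finite, by the Montel argument in that proof. So $G(x,y)$ is a finite complex number.

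\emph{The ``some (all)'' clause.} Recurrence does not depend on the vertex (Theorem~\ref{thm::reccap}), and the dichotomy above holds for every pair $x,y$. Therefore $G(x,y)=\infty$ for one pair implies recurrence, which in turn gives $G\equiv\infty$.

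The main obstacle is making sure the order of the limits matches. The theorem's proof identifies the iterated limit $\lim_{\alpha\to0}\lim_n$ with $\lim_n$ at $\alpha=0$, using \eqref{eq::eqLim} and \eqref{eq::eqLimv}. I also need these identities to remain valid when the $\alpha=0$ denominator limit is zero, so that the quotient legitimately gives $\infty$. I would check that the numerator converges to $1\neq0$ in this case. Then the quotient of the limits is determined even when the denominator vanishes, and the value $\infty$ is justified.
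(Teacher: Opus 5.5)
Your proposal is correct and is essentially the paper's argument: the paper reads the corollary directly off the closing formula of the preceding proof. It uses Theorem~\ref{thm::reccap} to decide when the denominator limit $\cp(x)/m(x)$ vanishes, and the fourth line of \eqref{eq::estv} to force $v_{0,n}^{(\cfreq_0)}(y)\to 1$ in the recurrent case, exactly as you do. Your order-of-limits worry is settled by \eqref{eq::eqLim} and \eqref{eq::eqLimv} applied separately to numerator and denominator, together with the observation (implicit in the paper) that $\Re\big(\L v_{\alpha,n}+\alpha v_{\alpha,n}\big)(x)\ge \alpha$ by \eqref{eq::aLplusalphaQ}, so for each $\alpha>0$ the inner limit of the quotient is the quotient of the inner limits.
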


\subsection{Neumann Laplacian}
The aim of this section is to  introduce the Neumann Laplacian on infinite complex-weighted graphs, and show that it coincides with the Dirichlet Laplacian for all measures if and only if the graph is recurrent. This result is known for real-weighted graphs and we use the same line of arguments, see \cite[Theorem 6.1]{KellerLenzWojcBook}. 

Let $(X,b,m)$ be a complex-weighted graph. We define the form $Q^{(N)}$ as a restriction of the formal form $\Q$ to $\mathcal D\cap \ell^2(X,m)$. We will call $Q^{(N)}$ the {\em form corresponding to the Neumann Laplacian}.

\begin{lemma}
The form $Q^{(N)}$ with the domain $D(Q^{(N)})=\mathcal D\cap \ell^2(X,m)$ is a densely defined closed sectorial form in $\ell^2(X,m)$.
\end{lemma}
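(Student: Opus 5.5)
Sectoriality, density and closedness are verified separately, closedness being the only real issue.

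\emph{Sectoriality.} This is immediate from Lemma~\ref{lem::formalQmapstosector}, since $Q^{(N)}$ is a restriction of $\Q$ to $\mathcal D\cap\ell^2(X,m)\subset\D(X)$. So $|\Im Q^{(N)}(f)|\le c\,\Re Q^{(N)}(f)$ with the same sectoriality constant $c$.

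\emph{Density.} Each $\phi\in C_c(X)$ lies in $\D(X)$: by local finiteness only finitely many terms of $\sum|\phi(x)-\phi(y)|^2|b(x,y)|$ are nonzero. Hence $C_c(X)\subset D(Q^{(N)})$. Since $C_c(X)$ is dense in $\ell^2(X,m)$ by the lemma above, $Q^{(N)}$ is densely defined.

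\emph{Closedness.} I would show that $D(Q^{(N)})$ is complete with respect to $\|f\|_\Q=(\Re\Q(f)+\|f\|^2)^{1/2}$, following the real-weighted argument with $\Re b$ in place of the weight.

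Let $(f_n)$ be $\|\cdot\|_\Q$-Cauchy. Then it is Cauchy in $\ell^2(X,m)$, so $f_n\to f$ in $\ell^2$. Since $m(x)>0$, this also gives pointwise convergence. For the energy part, use Fatou's lemma with the nonnegative weights $\Re b(x,y)$:
\[
\Re\Q(f-f_n)=\tfrac12\sum_{x,y}|f(x)-f(y)-f_n(x)+f_n(y)|^2\Re b(x,y)\le\liminf_{k\to\infty}\Re\Q(f_k-f_n).
\]
The right-hand side is small for large $n$. Thus $f-f_n\in\D$, so $f\in\D\cap\ell^2$ because $\D$ is a vector space. It also follows that $\Re\Q(f-f_n)\to0$, and together with $\|f-f_n\|\to0$ this gives $\|f-f_n\|_\Q\to0$.

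The point needing care is that finiteness of $\Re\Q$ suffices for membership in $\D$. This is exactly the third description of $\D(X)$ given earlier, which rests on sectoriality~\eqref{bsect}, so no further estimate of $\Im b$ is needed. Alternatively, one can reduce to real and imaginary parts via Lemma~\ref{lem::complexf} and cite closedness of the Neumann form for the real-weighted graph $(X,\Re b,m)$ from \cite{KellerLenzWojcBook}. Either way, closedness follows.
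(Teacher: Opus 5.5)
Your proof is correct and follows essentially the paper's route. Sectoriality comes from Lemma~\ref{lem::formalQmapstosector}, density from $C_c(X)$, and closedness from Fatou's lemma (lower semicontinuity) for $\Re\Q=\Q_{\Re b}$ under pointwise convergence. The only difference is that you carry out the completeness argument directly, and in doing so you explicitly derive $f\in\D$ from finiteness of the Fatou bound, whereas the paper cites the abstract fact that lower semicontinuity implies closedness \cite[Theorem B.9]{KellerLenzWojcBook}.
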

\begin{proof}
The sectoriality follows from Lemma \ref{lem::formalQmapstosector}. The density is clear, since $C_c(X)\subset \D$ is dense in $\ell^2(X,m)$. To  prove the closedness, we use the fact that  the lower semi-continuity implies closedness (see \cite[Theorem B.9, proof (ii)$\Rightarrow$(iii)]{KellerLenzWojcBook}). Since $\Re \Q(f)=\Q_{\Re b}(f)$, using lower-semicontinuity of the form $\Q_{\Re b}$ on the real-weighted graph $(X,\Re,b,m)$ (see \cite[Proposition 1.3]{KellerLenzWojcBook} for the details) and  Lemma~\ref{lem::complexf}(2),  we obtain that the form $\Q=\Q_b$ is lower semi-continuos, i.e.
\begin{equation}\label{eq:fQlow}
\Re \Q(f)\le \lim \inf_{n\to \infty} \Re \Q(f_n)
\end{equation}
for any $f, f_n\in \D, n\in \Bbb N$ such that $f_n(x)\to f(x), n\to \infty$ pointwise for all $x\in X$. Since $Q^{(N)}$ is a restriction of $\Q$ and convergence in $\ell^2(X,m)$ implies pointwise convergence, we get from \eqref{eq:fQlow} that 
$$
\Re Q^{(N)}(f)\le \lim \inf_{n\to \infty} \Re Q^{(N)}(f_n)
$$
for any  $f\in \ell^2(X,m)$ and $(f_n)_{n\in \Bbb N}$ converging to $f$ in $\ell^2(X,m)$. Therefore, $Q^{(N)}$ with the domain $D(Q^{(N)})=\mathcal D\cap \ell^2(X,m)$ is also lower semi-continuous, and, hence, closed.
\end{proof}

By the general theory of sectorial forms and operators, see e.g. \cite{Kato}, the form $Q^{(N)}$ defines an $m$-sectorial operator, which we call  the {\em Neumann Laplacian} and denote by $L^{(N)}$, with a domain $D(L^{(N)})\subset D(Q^{(N)})$, see \cite[p. 322, Theorem 2.1]{Kato}.
Following the same outline as in the proof of Lemma~\ref{lem::LK=mathcalL} we can show, that $L^{(N)}f(x)=\mathcal Lf(x)$ for all $f\in D(L^{(N)})$ and any $x\in X$.

\begin{lemma}\label{lem::DlDell2}
Let $(X,b,m)$ be a graph. Then  $D\big(Q^{(D)}\big)=D\big(Q^{(N)}\big)$ if and only if
$$
D(L^{(D)})=\{f\in D(Q^{(N)})\;\mid\; \L f\in \ell^2(X,m)\}.
$$

\end{lemma}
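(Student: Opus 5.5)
We prove Lemma~\ref{lem::DlDell2} by first establishing a general description of $D(L^{(D)})$, valid for every graph, from which both directions follow. The claimed identity is
\begin{equation*}
D(L^{(D)})=\{f\in D(Q^{(D)})\;\mid\;\L f\in \ell^2(X,m)\}.
\end{equation*}

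For the inclusion ``$\subseteq$'', let $f\in D(L^{(D)})$. Then $f\in D(Q^{(D)})$ by construction of the operator associated with the form, and $\L f=L^{(D)}f\in\ell^2(X,m)$ by Lemma~\ref{lem::LK=mathcalL}.

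For the inclusion ``$\supseteq$'', let $f\in D(Q^{(D)})$ with $\L f\in\ell^2(X,m)$. By Kato's characterization of the associated operator, it suffices to show that
\begin{equation*}
Q^{(D)}(f,g)=(\L f\mid g)\quad\text{for all } g\in D(Q^{(D)}).
\end{equation*}
\begin{itemize}
\item For $g=\phi\in C_c(X)$, this is exactly the second statement of Green's formula~(b). It applies because $f\in\D\cap\ell^2(X,m)$ and $\L f\in\ell^2(X,m)$.
\item By Theorem~\ref{thm::dol2cQ}, $C_c(X)$ is dense in $D(Q^{(D)})$ with respect to $\|\cdot\|_\Q$. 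Take $\phi_n\to g$ in this norm. The right-hand side $(\L f\mid\phi_n)$ converges to $(\L f\mid g)$ by $\ell^2$-continuity.
\item For the left-hand side, the Cauchy--Schwarz bound \eqref{eq::CSforsect} gives
\begin{equation*}
|\Q(f,g-\phi_n)|\le(1+c)\,(\Re\Q(f))^{1/2}\,(\Re\Q(g-\phi_n))^{1/2}\to0 .
\end{equation*}
\end{itemize}
Hence the identity holds for all $g\in D(Q^{(D)})$, so $f\in D(L^{(D)})$ and $L^{(D)}f=\L f$. This proves the general description.

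Direction ``$\Rightarrow$'' of the lemma is now immediate: if $D(Q^{(D)})=D(Q^{(N)})$, substitute this equality into the description just proved.

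Direction ``$\Leftarrow$'' is the main obstacle. Assume
\begin{equation*}
D(L^{(D)})=\{f\in D(Q^{(N)})\mid \L f\in\ell^2(X,m)\}=:\mathcal M .
\end{equation*}
The same Green-formula argument shows that $\mathcal M\supseteq D(L^{(N)})$. Indeed, for $f\in D(L^{(N)})$ we have $f\in D(Q^{(N)})$, and $\L f=L^{(N)}f\in\ell^2(X,m)$ by the pointwise identification of $L^{(N)}$ with $\L$. Therefore $D(L^{(N)})\subseteq D(L^{(D)})\subseteq D(Q^{(D)})$.

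Next we use that the domain of an $m$-sectorial operator is a core of its closed form (Kato, Theorem~VI.2.1). Hence $D(L^{(N)})$ is $\|\cdot\|_\Q$-dense in $D(Q^{(N)})$. Since $D(Q^{(D)})$ is $\|\cdot\|_\Q$-closed by Theorem~\ref{thm::dol2cQ}, and the two forms carry the same norm $\|\cdot\|_\Q$, we obtain
\begin{equation*}
D(Q^{(N)})=\overline{D(L^{(N)})}^{\|\cdot\|_\Q}\subseteq D(Q^{(D)}).
\end{equation*}
The reverse inclusion $D(Q^{(D)})\subseteq D(Q^{(N)})$ holds trivially because $\D_0\subset\D$. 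This gives $D(Q^{(D)})=D(Q^{(N)})$.

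The delicate points are checking that Green's formula~(b) genuinely applies to the Neumann domain, and citing the core property of the operator domain for sectorial (non-symmetric) forms correctly.
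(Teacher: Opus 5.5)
Your proof is correct. The preliminary inclusion and the forward direction are the paper's argument: Green's formula on $C_c(X)$, extended by the density from Theorem~\ref{thm::dol2cQ} via \eqref{eq::CSforsect}. You simply package this as the graph-independent identity $D(L^{(D)})=\{f\in D(Q^{(D)})\mid \L f\in\ell^2(X,m)\}$ and then substitute the domain equality.

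The reverse direction shares the paper's first step, $D(L^{(N)})\subseteq D(L^{(D)})$, and then closes differently.

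\emph{The paper's route.} It concludes $L^{(N)}=L^{(D)}$ because both are $m$-sectorial restrictions of $\L$. This implicitly uses that an $m$-accretive operator has no proper accretive extension. It then says the associated forms coincide, which implicitly uses that an $m$-sectorial operator determines its closed form uniquely.

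\emph{Your route.} You go directly to the form domains. You use only that $D(L^{(N)})$ is a core of $Q^{(N)}$ (Kato's first representation theorem) and that $D(Q^{(D)})$ is a $\|\cdot\|_\Q$-closed subspace of $D(Q^{(N)})$ carrying the same norm.

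\emph{Comparison.} Your argument relies on one fewer abstract fact and delivers exactly the domain equality the lemma asks for. It is essentially an unpacked version of ``the operator determines the form,'' since that uniqueness is itself proved via the core property. The paper's route gives the stronger conclusion $L^{(N)}=L^{(D)}$ along the way, but this also follows from your result, since equal forms have equal associated operators.

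One minor wording point: the inclusion $D(L^{(N)})\subseteq\{f\in D(Q^{(N)})\mid \L f\in\ell^2(X,m)\}$ comes from the pointwise identity $L^{(N)}f=\L f$, not from ``the same Green-formula argument,'' although that identity is itself obtained from Green's formula as in Lemma~\ref{lem::LK=mathcalL}.
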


\begin{proof}
By the definition of the associated operator, $L^{(D)}$ maps to $\ell^2(X,m)$, Hence, since its action coincides with the action of the formal Lapalcian, the inclusion
\begin{equation*}\label{eq::incl}
D(L^{(D)})\subset\{f\in D(Q^{(D)})\;\mid\; \L f\in \ell^2(X,m)\},
\end{equation*}
always holds. This immediately implies the inclusion 
$$
D(L^{(D)})\subset\{f\in D(Q^{(N)})\;\mid\; \L f\in \ell^2(X,m)\},
$$
since $D(Q^{(D)})\subset D(Q^{(N)})$ by the definitions of the domains.
\begin{itemize}[leftmargin=*]
\item
Let $D\big(Q^{(D)}\big)=D\big(Q^{(N)}\big)$.  By the above we need to prove 
$$
\{f\in D(Q^{(N)})\;\mid\; \L f\in \ell^2(X,m)\}\subset D(L^{(D)}).
$$
Let $f\in D(Q^{(N)})=D(Q^{(D)})$. Then by Greens' formula we have
$$
Q^{(D)}(f, \phi)=(\L f\;\mid\;\phi),
$$
for all $\phi\in C_c(X)$. As $D(Q^{(D)})=\overline{C_c(X)}^{\|\cdot\|_\mathcal Q}$  by Theorem \ref{thm::dol2cQ}, we conclude
$$
Q^{(D)}(f, g)=(\L f\;\mid\;g),
$$
for all $g\in D(Q^{(D)})$. Hence, $f\in D(L^{(D)})$.
\item
Let $D(L^{(D)})=\{f\in D(Q^{(N)})\;\mid\; \L f\in \ell^2(X,m)\}.$ By the definition of the associated operator, $L^{(N)}$ maps to $\ell^2(X,m)$. Hence, since its action coincides with the action of the formal Lapalcian we conclude
$$
D(L^{(N)})\subset \{f\in D(Q^{(N)})\;\mid\; \L f\in \ell^2(X,m)\}=D(L^{(D)}).
$$
As both operators are restrictions of $\L$ and $m$-sectorial, we conclude $D(L^{(N)})= D(L^{(D)}).$ Hence, the associated forms also coincide.
\end{itemize}
\end{proof}

\begin{theorem}
A complex-weighted graph $(X,b,m)$ is recurrent if and only if any of the following equivalent conditions hold.
\begin{itemize}
\item[{\em(i)}]
$\D=\D_0$.
\item[{\em(ii)}]
for all measures $\mu:X\to \Bbb R^+$ the domains $D\big(Q_\mu^{(D)}\big)$ and $D\big(Q_\mu^{(N)}\big)$ of the forms on graph $(X,b,\mu)$,  coincide.
\item[{\em(iii)}]
for all measures $\mu:X\to \Bbb R^+$:
$$
D(L_\mu^{(D)})=\{f\in D(Q_\mu^{(N)})\;\mid\; \L f\in \ell^2(X,\mu)\},
$$
where $L_\mu^{(D)}$ is the Dirichlet Laplacian and $Q_\mu^{(N)}$ is a form, corresponding to the Neumann Laplacian on the graph $(X,b,\mu)$.
\end{itemize}

\end{theorem}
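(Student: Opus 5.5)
The proof will close the cycle recurrence $\Rightarrow$ (i) $\Rightarrow$ (ii) $\Rightarrow$ recurrence, and then use Lemma \ref{lem::DlDell2} to get (ii) $\Leftrightarrow$ (iii). Throughout we reduce to the real-weighted graph $(X,\Re b,\mu)$. The reduction rests on three facts established earlier. First, $\D_b=\D_{\Re b}$ by the description of $\D$ via $\Re b$. Second, $\D_0$ for $b$ coincides with $\D_0$ for $\Re b$, since $\Re\Q_b=\Q_{\Re b}$ and membership in $\D_0$ only involves $\Re\Q(f-\phi_n)\to0$ (Remark \ref{rem::d0ReQ}). Third, $(X,b,m)$ is recurrent iff $(X,\Re b,m)$ is, by Corollary \ref{cor::Rebrec}. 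None of these sets depend on the measure. So (i) is literally the real statement $\D_{\Re b}=\D_{0,\Re b}$, and the domains in (ii) are $\D\cap\ell^2(X,\mu)$ and $\D_0\cap\ell^2(X,\mu)$ in both the real and the complex setting.

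For recurrence $\Leftrightarrow$ (i), I would simply invoke \cite[Theorem 6.1]{KellerLenzWojcBook} for $(X,\Re b,m)$ via the identifications above. Alternatively, here is a direct argument. If the graph is recurrent, then $1\in\D_0$ by Theorem \ref{thm::1inD0}. For $f\in\D$, first reduce to real-valued bounded $f$ by Lemma \ref{lem::fRefImfH} and truncation; this truncation step is the delicate part, so citing the real theorem is cleaner. Then approximate $f$ by $f\phi_n$, where $\phi_n\to1$ with $\Re\Q(1-\phi_n)\to0$. For the converse, (i) gives $1\in\D=\D_0$, hence recurrence by Theorem \ref{thm::1inD0}.

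(i) $\Rightarrow$ (ii) is immediate by intersecting with $\ell^2(X,\mu)$. For (ii) $\Rightarrow$ recurrence, choose a finite measure $\mu$, for example $\mu(x)=2^{-k}$ along an enumeration of $X$. Then $1\in\ell^2(X,\mu)\cap\D=D(Q^{(N)}_\mu)$, so (ii) gives $1\in D(Q^{(D)}_\mu)\subset\D_0$, and Theorem \ref{thm::1inD0} yields recurrence.

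Finally, (ii) $\Leftrightarrow$ (iii) holds measure by measure by Lemma \ref{lem::DlDell2} applied to $(X,b,\mu)$. I expect the main obstacle to be recurrence $\Rightarrow$ (i), if done without citing the real case: the complex sesquilinear structure gives no monotonicity, so one must route everything through $\Re b$. Citing the real-weighted theorem after the identification $\D_{0,b}=\D_{0,\Re b}$ should resolve it.
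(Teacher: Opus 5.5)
Your proposal is correct and follows essentially the same route as the paper. Recurrence $\Rightarrow$ (i) is obtained by passing to the real-weighted graph $(X,\Re b,m)$ via Corollary \ref{cor::Rebrec} and \cite[Theorem 6.1]{KellerLenzWojcBook}, then complexifying with Lemma \ref{lem::fRefImfH}; (i) $\Rightarrow$ (ii) by intersecting with $\ell^2(X,\mu)$; (ii) $\Rightarrow$ recurrence by taking a finite measure and applying Theorem \ref{thm::1inD0}; and (ii) $\Leftrightarrow$ (iii) by Lemma \ref{lem::DlDell2} for each measure.
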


\begin{proof}

\textit{Recurrence}$\Rightarrow$(i). Let $(X,b,m)$ be recurrent. By Corollary \ref{cor::Rebrec} the graph $(X,\Re b,m)$ is also recurrent. Hence, any real-valued function from $\D_0$ is in $\D$ (see \cite[Theorem 6.1 (i.a)]{KellerLenzWojcBook}). Now (i) follows from Lemma \ref{lem::fRefImfH} and Lemma \ref{lem::complexf}. 

(i)$\Rightarrow$(ii) since $D\big(Q_\mu^{(D)}\big)=\D_0\cap\ell^2(X,\mu)$ and $D\big(Q_\mu^{(D)}\big)=\D\cap\ell^2(X,\mu)$ by the definitions of the corresponding forms.

(ii)$\Rightarrow$\textit{Recurrence}. Taking a finite measure $\mu$ (i.e. $\mu(X)<\infty$), we get $1\in \ell^2(X,\mu)$. Hence, $1\in D(Q_\mu^{(N)})=D(Q_\mu^{(D)})$, from where follows $1\in \D_0$ and recurrence by Theorem \ref{thm::1inD0}.

Finally, (ii)$\Leftrightarrow $(iii) due to Lemma \ref{lem::DlDell2}.

\end{proof}

\section*{Acknowledgments}
The author thanks Philipp Bartmann  and Matthias Keller for fruitful discussions and helpful  comments on the topic.

\end{document}